\documentclass{amsproc}
\usepackage{mathtools}
\usepackage{hyperref}

\newtheorem{theorem}{Theorem}[section]
\newtheorem{lemma}[theorem]{Lemma}
\newtheorem{pro}[theorem]{Proposition}
\newtheorem{cor}[theorem]{Corollary}

\theoremstyle{definition}
\newtheorem{definition}[theorem]{Definition}
\newtheorem{example}[theorem]{Example}

\theoremstyle{remark}
\newtheorem{remark}[theorem]{Remark}

\numberwithin{equation}{section}

\def\k{{\mathbb K}}

\newcommand{\der}{\operatorname{\mathsf{PDer}}}

\newcommand{\innder}{\operatorname{\mathsf{Ham}}}
\newcommand{\hh}{\operatorname{\mathsf{PH^1}}}
\newcommand{\ad}{\operatorname{\mathsf{Ham}}}
\newcommand{\wt}{\operatorname{\mathsf{wt}}}
\newcommand{\fract}{\operatorname{\mathsf{Fract}}}

\newcommand\rk{\operatorname{\mathsf{rk}}}

\newcommand{\ZC}{\operatorname{\mathsf{Z}}}
\newcommand{\NS}{\operatorname{\mathsf{N}}}

\def\ch{{\mathcal H}}

\newcommand{\Z}{\mathbb{Z}}
\newcommand{\Q}{\mathbb{Q}}

\begin{document}

\title{Poisson derivations and cohomology of Poisson nilpotent algebras}

\author{S Launois}
\address{Universit\'e de Caen Normandie, 
CNRS UMR 6139 LMNO, 14032 Caen, France}
\email{stephane.launois@unicaen.fr}

\author{S A Lopes}
\address{CMUP, Departamento de Matem\'atica, Faculdade de Ci\^encias, Universidade do Porto, Rua do Campo Alegre s/n, 4169--007 Porto, Portugal}
\email{slopes@fc.up.pt}

\author{I Oppong}
\address{School of Computing and Mathematical Sciences,
University of Greenwich, 
Old Royal Naval College, Park Row,
London SE10 9LS, UK}
\email{I.Oppong@greenwich.ac.uk}

\subjclass[2020]{Primary 16T20, 17B37; Secondary 16W25}
\date{January 1, 1994 and, in revised form, June 22, 1994.}


\keywords{Poisson derivations, Poisson cohomology, Poisson Nilpotent Algebra}

\begin{abstract}
We compute the Poisson derivations and the first Poisson cohomology group of uniparameter Poisson Nilpotent Algebras (PNA for short) under the hypothesis that Poisson-normal elements are Poisson-central. This applies in particular to certain Poisson algebras associated to Bott--Samelson varieties.
\end{abstract}

\maketitle

\section{Introduction}

Our aim in this article is to initiate the study of Poisson (co)homology of Poisson Nilpotent Algebras  (PNA for short), also known as Poisson CGL extensions.  These Poisson polynomial algebras have remarkable properties; for instance, under some technical hypotheses, Goodearl and Yakimov have shown that they possess a Poisson cluster algebra structure~\cite{goodearl2023cluster}. Recent work of Lu and Matviichuk~\cite{LuMatviichuk} further emphasizes the relevance of Poisson cohomology in this setting: they show that symmetric Poisson CGL extensions can be understood as Poisson deformations of log-canonical Poisson structures, with the corresponding deformation and classification theory governed by suitable second Poisson cohomology groups (see also~\cite{S93} for the case of Poisson superalgebras). Examples of such PNAs include coordinate rings of Bott--Samelson varieties due to a result of Elek and Lu \cite{elek-lu}. 

In \cite{LLO25}, we computed the derivations and first Hochschild cohomology group of Quantum Nilpotent Algebras (QNA for short) under a technical assumption which covers the positive part of any quantized universal enveloping algebra of a complex simple Lie algebra, but which excludes some important classes of QNAs, such as quantum matrices.

Our aim in this article is to compute the Poisson derivations and first Poisson cohomology group for PNAs under the technical condition that Poisson-normal elements are Poisson-central. 
Examples of such PNAs include certain (coordinate rings of) Bott--Samelson varieties. We could relax our hypotheses so as to mimic the technical assumptions in \cite{LLO25}, but here we favoured a smoother exposition at the cost of the more restrictive assumption that Poisson-normal elements are Poisson-central. 

In order to compute Poisson derivations of a PNA $R=\k[x_1, \ldots, x_N]$, where $\k$ is a field of characteristic $0$, we use the initial Poisson cluster $(y_1,\dots,y_N)$ constructed by Goodearl and Yakimov. This has the property that 
$$\k[y_1,\dots,y_N]\subseteq R \subseteq \k[y_1^{\pm 1},\dots,y_N^{\pm 1}]$$
and $\k[y_1,\dots,y_N]$ is a Poisson affine space in the sense that its Poisson structure is log-canonical (i.e., there exist scalars $q_{ij}$ such that $\{y_i,y_j\}=q_{ij} y_{i}y_{j}$ for all $i,j$). The Poisson derivations of the associated Poisson torus $\k[y_1^{\pm 1},\dots,y_N^{\pm 1}] $ were computed in \cite{lo} (see also~\cite{LuMatviichuk}), where it was proved that any such Poisson derivation is the sum of a Hamiltonian derivation and a central derivation (i.e., a Poisson derivation that acts on each generator $y_i$ by multiplication by a Poisson-central element). Thus our strategy to describe Poisson derivations of $R$ is to extend them to the Poisson torus $\k[y_1^{\pm 1},\dots,y_N^{\pm 1}] $ so that we can use the results in \cite{lo} to describe their actions on the variables $y_i$. We then need to describe the action on the variables $x_i$ and this causes a number of issues. For instance, the Hamiltonian derivation appearing is associated to an element of the Poisson torus $\k[y_1^{\pm 1},\dots,y_N^{\pm 1}] $ and so \textit{a priori} it might not belong to the PNA $R$. Moreover, the Poisson-central elements appearing belong to the Poisson center of the Poisson torus and \textit{a priori} not to the Poisson center of $R$, which can be much smaller. So while we have some control on the action of a Poisson derivation on the $y_i$, this information is only partial as it involves parameters from a larger algebra. 

To overcome these difficulties, we introduce an intermediate localization $RE^{-1}$ of $R$ which has the same Poisson center as $R$ and which can also be described as a localization of $\k[y_1,\dots,y_N]$ at some of the $y_i$. This second description is key for us as it allows to describe its Poisson derivations from those of the associated Poisson torus. In particular, we show that each Poisson derivation of $RE^{-1}$ is a sum of a Hamiltonian derivation and a Poisson-central derivation (that is, a Poisson derivation that acts by multiplication by a Poisson central element on the non-Poisson-central variables $y_i$). The fact that the Poisson center of $RE^{-1}$ is equal to the Poisson center of $R$ is key as this later allows us to prove that the defining element of the Hamiltonian derivation actually belongs to $R$ (and not just $RE^{-1}$).

Our main results show that any Poisson derivation of $R$ is the sum of a Hamiltonian derivation of $R$ and a Poisson central derivation of $R$, acting on each $x_i$ by multiplication by a Poisson-central element of $R$. It turns out that this central multiple depends only on the homogeneous degree of $x_i$ (in the sense discussed in Subsection~\ref{SS:PPL:HPP}), whence the first Poisson cohomology group of $R$ is free over the Poisson center of $R$, with rank equal to the rank of $R$ as a PNA. Our main results apply in particular to various Poisson algebras related to Bott--Samelson varieties for which the Poisson Rigidity Theorem of Levitt-Yakimov \cite[Theorem B]{levitt-yakimov} shows that the Poisson automorphism group is (isomorphic to) the semi-direct product of a torus of rank the rank of the PNA by a finite group.

Due to results of Goodearl and Yakimov \cite{goodearl2023cluster}, many Poisson CGL have a Poisson cluster algebra structure (see also \cite{Lu2026}) and our article suggests computing Poisson derivations of Poisson cluster algebras. We will go back to these in future work. 

This article is organized as follows: after fixing notations about PNAs in Section 2, we recall in Section 3 the construction by Goodearl and Yakimov of an initial Poisson cluster. In Section 4, we focus on describing the Poisson derivations of the localization $RE^{-1}$ and in Section 5, we compute the Poisson centers of the various Poisson algebras involved. Finally, our main results are established in Section 6. \\

We work over an arbitrary base field $\k$ of characteristic $0$ and identify its prime subfield with $\Q$, the field of rational numbers. In particular, unless otherwise stated, all maps between $\k$-vector spaces and $\k$-algebras are assumed to be $\k$-linear and unadorned tensor products are over $\k$. 

As usual, an element of a list appearing with a hat is omitted from this list. Given integers $i, j\in\Z$, we set $[i,j]:=\{k\in\Z\mid i\leq k\leq j\}$ and let $\Z_{\geq 0}$ and $\Z_{> 0}$ be the sets of nonnegative and positive integers, respectively.

\section{Poisson polynomial algebras}
\subsection{Poisson algebras}

Fix a base field $\k$ of characteristic zero throughout. All algebras are assumed to be over $\k$, and all relevant maps (automorphisms, derivations, etc.) are assumed to be $\k$-linear.

$A$ Poisson algebra is a commutative, associative $\k$-algebra $R$ endowed with a skew-symmetric $\k$-bilinear map $\{-,-\}:R\times R\rightarrow R$ which satisfies both the Leibniz rule and the Jacobi identity. In particular, $\{-,-\}$ is a Lie bracket and is often referred to as the \textit{Poisson bracket} of $R$. 

There are many examples of Poisson algebras in the literature. For now, we just mention the specific examples of Poisson affine spaces and Poisson tori which will play a crucial role in this article. 

Given any skew-symmetric matrix $\Lambda=(\lambda_{ij})\in M_n(\k),$ one defines a (unique) Poisson bracket on the polynomial algebra  $R=\k[x_1, \ldots, x_n]$ via $\{x_i, x_j\}=\lambda_{ij}x_ix_j$. We call this Poisson algebra the \textit{Poisson affine space} associated to $\Lambda$. This Poisson structure on $R$ extends uniquely to a Poisson structure on $T:=\k[x_1^{\pm 1}, \ldots, x_n^{\pm 1}]$ (and in fact to any localization of $R$), which we call the \textit{Poisson torus} associated to $\Lambda$. 

We finish this paragraph with a bit of terminology: A \textit{Poisson ideal} of $R$ is an ideal $J$ such that $\{R, J\}\subseteq J$. A Poisson algebra $R$ is {\em Poisson-simple} if its only Poisson ideals are $0$ and $R$. The \textit{Poisson center} of a Poisson algebra $R$ is defined as
$$\ZC_P(R):=\{z\in R\mid \{z, R\}=0\}.$$
An element $c\in R$ os called \textit{Poisson-normal} if $cR$ is a Poisson ideal of $R$; equivalently, if $\{c, R\}\subseteq cR$. The set of Poisson-normal elements of $R$ is not in general closed under addition, so we define the \textit{Poisson-normal subalgebra} of $R$ as the subalgebra of $R$ generated by its Poisson-normal elements. This algebra is denoted $\NS_P(R)$ and consists of finite sums of Poisson-normal elements of $R$. Clearly, 
$\ZC_P(R)\subseteq \NS_P(R).$ 

A \textit{Poisson automorphism} of a Poisson algebra is an algebra automorphism that preserves the Poisson bracket. Similarly, a \textit{Poisson derivation} of a Poisson algebra is a ($\k$-linear) derivation for both the associative and the Lie structures. As usual, we denote by $\mathrm{PAut}(R)$  the group of Poisson automorphisms of $R$, and by $\mathrm{PDer}(R)$ its Lie algebra of Poisson derivations. A very natural and important class of Poisson derivations consists of the \textit{Hamiltonian derivations} (also known as Poisson inner derivations) $\innder_x:=\{x,-\}$, where $\innder_x (y):=\{x,y\}$ for all $x, y \in R$. Then $\innder(R):=\{\innder_x\mid x\in R\}$ is a Lie ideal of $\mathrm{PDer}(R)$ and the quotient Lie algebra $\hh(R):=\mathrm{PDer}(R)/\innder(R)$ is the first Poisson cohomology group of $R$.

\subsection{Poisson polynomial algebras}

Let $A$ be a Poisson $\k$-algebra and $\sigma, \delta:A\rightarrow A$ be  $\k$-linear maps. Then, by \cite[Theorem 1.1]{oh}, we can extend the Poisson structure on $A$ to a Poisson bracket on the polynomial algebra $R=A[x]$ in such way that $\{x,a\}=\sigma(a)x+\delta(a)$ for all $a\in A$ if and only if the following are satisfied:
\begin{itemize}
\item[1.] $\sigma (\{a,b\})=\{\sigma(a), b\}+\{a, \sigma(b)\}$ for all $a,b\in A$; that is, $\sigma \in \mathrm{PDer}(A)$.
\item[2.] $\delta(\{a, b\})=\{\delta(a), b\}+\{a, \delta(b)\}+\sigma(a)\delta(b)-\delta(a)\sigma(b)$ for all $a,b\in A$. In this case, we say that $\delta$ is a \textit{Poisson $\sigma$-derivation} of $A$.
\end{itemize}
If these conditions are satisfied, we say that $R$ is a  \textit{Poisson-Ore extension} of $A$, and we write $R=A[x; \sigma, \delta]_P$. This construction can easily be iterated to give rise to so-called iterated Poisson-Ore extensions over $\k$, which take the following form:
$$
R=\k[x_1]_P[x_2;\sigma_2, \delta_2]_P\cdots [x_N;\sigma_N, \delta_N]_P,
$$
where $\sigma_k$ and $\delta_k$ are Poisson derivations and  Poisson $\sigma_k$-derivations of $$R_{k-1}:=\k[x_1]_P[x_2;\sigma_2, \delta_2]_P\cdots [x_{k-1};\sigma_{k-1}, \delta_{k-1}]_P$$ for each $k\in [1,N].$ Note that $R_0=\k$ and $\sigma_1=\delta_1=0.$ 

We note that a Poisson affine space can be presented as an iterated Poisson-Ore extension with all Poisson $\sigma$-derivations equal to zero.

\subsection{Poisson nilpotent algebras}
In this article, we will mainly be concerned with Poisson nilpotent algebras (PNAs for short) (also known as Poisson CGL-extensions). These Poisson algebras were introduced by Goodearl and Yakimov in \cite{goodearl2023cluster} as Poisson analogues of the so-called quantum nilpotent algebras.

\begin{definition}\cite[Def. 5.1]{goodearl2023cluster}
\label{dcgl}
A \textit{Poisson nilpotent algebra} (PNA) is an iterated Poisson-Ore extension of the form 
\begin{equation}
\label{I.P-O.E-definition}
R=\k[x_1]_P[x_2;\sigma_2, \delta_2]_P\cdots [x_N;\sigma_N, \delta_N]_P,
\end{equation}
equipped with a rational Poisson action of a torus $\mathcal{H}=(\k^*)^m$ such that the following properties are satisfied:
\begin{itemize}
\item[(a)] the generators $x_1, \ldots, x_N$ are $\mathcal{H}$-eigenvectors;
\item[(b)] for every $k\in [2,N]$, $\delta_k$ is a locally nilpotent Poisson  $\sigma_k$-derivation of the subalgebra $R_{k-1}$ of $R$;
\item[(c)] For every $k\in [1,N]$, there exists $h_k \in \mathrm{Lie}(\mathcal{H}) $ (the Lie algebra of the algebraic group $\mathcal{H}$) such that  $\sigma_k$ is the restriction of $h_k$ to $R_{k-1}$ and the $h_k$-eigenvalue of $x_k$, to be denoted by $\lambda_k$, is nonzero.
\end{itemize}
\end{definition}
Note for later use that $x_j$ is an eigenvector for $\sigma_k$ ($j<k$). We denote by $\lambda_{kj}$ the corresponding eigenvalue, that is $\sigma_k (x_j)= \lambda_{kj} x_j$. We then say that the PNA is \textit{uniparameter} if $\lambda_k, \lambda_{kj}\in \Q$, for all $k, j\in [1, N]$, with $j<k$.

Observe that the group of invertible elements of a PNA is reduced to $\k^*$ since any PNA is a commutative polynomial algebra over $\k$. 

Examples of PNAs include Poisson affine spaces, Poisson matrix varieties and Poisson structures attached to Bott--Samelson varieties (see~\cite[Theorem 5.12]{elek-lu}). 

Let $n$ be the {\em rank} of the PNA $R$; that is, $\rk(R):=|\{i \in [1,N] \mid \delta_i=0\}|=n$. (The rank of $R$ is also equal to the number of height one Poisson-prime ideals of $R$ which are invariant under $\mathcal{H}$, see \cite[Sec. 5.2]{goodearl2023cluster} and Section~\ref{SS:PPL:HPP} below). It follows from \cite[Theorem 6.8]{goodearl2023cluster} that we can (and will) assume that $m=n$, so that $\mathcal{H}=(\k^*)^n$. In other words, we assume that $\mathcal{H}$ is the largest torus making $R$ into a PNA.

\begin{example}\label{E:B-S}
From \cite[5.3]{elek-lu}, for any sequence $\bf{u}$ of simple reflections in the Weyl group associated to a complex simple Lie algebra, the coordinate ring of the chart $\mathcal{O}^{\bf{u}}$ of the Bott--Samelson variety $Z_{\bf{u}}$ is a PNA. 
\end{example}


\section{Poisson-prime elements and localizations of PNAs}

In this section, we review the algorithmic construction, due to Goodearl--Yakimov \cite{goodearl2023cluster}, of homogeneous elements $y_1, \dots , y_N$ of a PNA $R$ of rank $n$ as in \eqref{I.P-O.E-definition}.  These elements are algebraically independent and the PNA $R$ is sandwiched between the polynomial algebra $\k [y_1, \dots y_N]$ and the associated Laurent polynomial ring $\k [y_1^{\pm 1}, \dots y_N^{\pm 1}]$.  Our strategy to compute the Poisson derivations of $R$ will be to first describe the action of derivations on the $y_i$. In this section, we also establish the necessary results to then later compute the action of a Poisson derivation on the $x_i$.

\subsection{Homogeneous Poisson-prime elements}\label{SS:PPL:HPP}

Let $X(\mathcal{H})$ denote the set of all rational characters of the torus $\mathcal{H}$. Then, $X(\mathcal{H})\simeq\Z^n$
is an abelian group called the \textit{character group} of $\mathcal{H}.$  The action of $\mathcal{H}$ on $R$ induces an $X(\mathcal{H})$-grading of $R.$ The $\mathcal{H}$-eigenvectors are exactly the nonzero homogeneous elements under this grading (see \cite[Sec. 1.4]{goodearl2023cluster}). 

The ideal $pR$ is \textit{Poisson-prime} if and only if it is a Poisson ideal and a prime ideal. A nonzero element $p\in R$ is said to be a \textit{Poisson-prime element} if  $pR$ is a Poisson-prime ideal (see \cite[Sec. 4.1]{goodearl2023cluster}).  Finally, a Poisson-prime element $p\in R$ that is also an  $\mathcal{H}$-eigenvector is called a \textit{homogeneous Poisson-prime element} or a \textit{Poisson-prime $\mathcal{H}$-eigenvector}.

The algorithmic construction due to the work  Goodearl and Yakimov \cite{goodearl2023cluster} of the homogeneous Poisson-prime elements relies on the existence of a \textit{colouring map} $\mu: [1, N] \rightarrow [1,n]$. Attached to such a map, one can define two functions, the \textit{predecessor} function $p=p_\mu:[1,N]\rightarrow [1,N]\sqcup \{-\infty\}$ and the \textit{successor} function $s=s_\mu:[1,N]\rightarrow [1,N]\sqcup \{+\infty\}$ by: 
$$
p(k)=\begin{cases}
\text{max} \ \{j<k\mid \mu(j)=\mu(k)\} & \text{if $\exists j<k$ such that $\mu(j)=\mu(k),$}\\
-\infty & \text{otherwise,}
\end{cases}
$$ and 
$$
s(k)=\begin{cases}
\text{min} \ \{j>k\mid \mu(j)=\mu(k)\} & \text{if $\exists j>k$ such that $\mu(j)=\mu(k),$}\\
+\infty & \text{otherwise.}
\end{cases}
$$

In \cite{goodearl2023cluster}, the authors construct a colouring map $\mu: [1, N] \rightarrow [1,n]$ and use it to describe the homogeneous Poisson-prime elements of a PNA. We recall their result below.

\begin{theorem} (\cite[Theorem 5.5]{goodearl2023cluster})
\label{homo}
Let $R$ be a PNA of rank $n$ as in \eqref{I.P-O.E-definition}. There exists a surjective function $\mu: [1, N] \rightarrow [1,n]$ such that the following homogeneous elements $y_1, \ldots, y_N$ of $R$ can recursively be constructed as follows:
\begin{equation}
\label{5T}
y_k:=\begin{cases}
y_{p(k)}x_k-c_k, & \text{if $p(k)\neq -\infty$},\\
x_k, & \text{if $p(k)= -\infty$},
\end{cases}
\end{equation}
for some homogeneous $c_k\in R_{k-1}.$ The elements $y_1, \ldots, y_N$  satisfy the  property that, for every $k\in [1,N],$ we have
\begin{equation}
\label{T5}
\{y_j\mid j\in [1,k], \  s(j)>k\}
\end{equation}
is the set of homogeneous Poisson-prime elements  of $R_k$, up to scalar multiples. In particular, $y_k$ is a homogeneous Poisson-prime element of $R_k$ and a prime element of $R$, for all $k\in [1, N].$
\end{theorem}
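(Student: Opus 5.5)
We prove the statement by induction on $k$, building $\mu$, the elements $y_k$ and the description of the homogeneous Poisson-prime elements of $R_k$ together. The base case is $R_1=\k[x_1]$ with the zero bracket, $\mu(1)=1$ and $y_1=x_1$. Here $x_1$ is homogeneous and prime. It is Poisson-prime trivially. Up to scalars it is the only homogeneous Poisson-prime element, since homogeneous elements of $\k[x_1]$ are monomials because $x_1$ has nonzero $h_1$-weight.

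For the inductive step, assume the set $P_{k-1}=\{y_j\mid j\le k-1,\ s(j)>k-1\}$ consists, up to scalars, of the homogeneous Poisson-prime elements of $R_{k-1}$, with colours already assigned. We split into two cases according to whether $\delta_k=0$.

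\textbf{Case $\delta_k=0$.} Here $x_k$ is Poisson-normal in $R_k$ and generates a prime ideal, because $R_k/x_kR_k\cong R_{k-1}$. We open a new colour and set $y_k=x_k$, so $p(k)=-\infty$. Every old $y_j\in P_{k-1}$ stays Poisson-prime in $R_k$. Indeed, $\{x_k,y_j\}=\sigma_k(y_j)x_k$, and since $y_j$ is homogeneous and $\sigma_k$ is the restriction of $h_k$, $\sigma_k(y_j)$ is a scalar multiple of $y_j$. Primality is preserved under adjoining a polynomial variable.

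\textbf{Case $\delta_k\neq0$.} We follow the standard CGL argument (Goodearl--Yakimov, adapting the quantum case of Goodearl--Launois--Lenagan and Launois--Lenagan--Rigal). Let $d$ be maximal with $\delta_k^d(R_{k-1})$ not contained in a Poisson-prime ideal. The key step is to show that there is a unique $y_j\in P_{k-1}$ (define $p(k)=j$ and $\mu(k)=\mu(j)$) such that $y_jx_k$ can be corrected by an element $c_k\in R_{k-1}$ into a Poisson-normal element $y_k=y_jx_k-c_k$ of $R_k$. To find it, pass to the localization $R_{k-1}[P_{k-1}^{-1}]$. Because $\delta_k$ is locally nilpotent and $\sigma_k$ and $\delta_k$ satisfy a $q$-skew relation $\delta_k\sigma_k-\sigma_k\delta_k=\lambda_k\delta_k$ (coming from the weight of $x_k$), the Poisson analogue of the Cauchon deleting-derivation trick applies. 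It gives an element $x_k+\sum_m(\text{coeff})\,\delta_k^m(\cdot)$ that is Poisson-normal in the localized extension. Clearing denominators produces $y_jx_k-c_k$ with $c_k$ homogeneous.

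It then remains to show four things:
\begin{itemize}
\item[(i)] $y_k$ is prime in $R$. We show it is irreducible of degree one in $x_k$ with coprime coefficients, then use that $R$ is a polynomial ring over $R_k$.
\item[(ii)] $y_j$ ceases to be Poisson-prime in $R_k$, since $\{x_k,y_j\}$ picks up the nonzero $\delta_k$-part. This is why $s(j)=k$ and $y_j$ drops out of the list.
\item[(iii)] The other elements of $P_{k-1}$ remain Poisson-prime, because $\delta_k$ kills them up to the normalizing factor.
\item[(iv)] There are no other homogeneous Poisson-prime elements.
\end{itemize}

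For (iv), let $q$ be a homogeneous Poisson-prime element of $R_k$. If $q\in R_{k-1}$, we reduce to the induction hypothesis. Otherwise we compare leading coefficients in $x_k$ to show that $q$ is associate to $y_k$.

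The main obstacle is the existence and uniqueness of $p(k)$ in the case $\delta_k\neq0$. This requires controlling $\delta_k$ on the normal elements of $R_{k-1}$ and proving that exactly one of them fails to stay Poisson-normal. I would first try to reduce this to the corresponding statement for the Poisson torus via the deleting-derivation map, relying on the nonvanishing of $\lambda_k$ from Definition~\ref{dcgl}(c). Surjectivity of $\mu$ onto $[1,n]$ holds because new colours are created exactly at the indices with $\delta_k=0$, and there are $n$ of them.
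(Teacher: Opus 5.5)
The paper gives no proof of this statement. It is quoted from Goodearl--Yakimov, who obtain it from their analysis of Poisson--Ore extensions of $\mathcal H$-Poisson-UFDs, applied one extension at a time (cf.\ the results quoted in Remark~\ref{T25}); that is the same skeleton as your induction. Your base case and your case $\delta_k=0$ are fine. One thing is missing in the latter: you never check that $x_k$ and the old $y_j$ exhaust the homogeneous Poisson-prime elements of $R_k$. This is easy. If $q=\sum_i a_ix_k^i$ is one, not associate to $x_k$, then $\{x_k,q\}=x_k\sum_i\sigma_k(a_i)x_k^i\in qR_k$ gives $\sum_i\sigma_k(a_i)x_k^i=rq$ with $r\in R_{k-1}$. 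The nonzero $a_i$ have pairwise distinct $h_k$-eigenvalues because $\lambda_k\neq0$, so only one $a_i$ is nonzero, and then $q\in R_{k-1}$.

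The genuine gap is the case $\delta_k\neq0$, which is only a list of things that ``remain to show''. Note that $\sigma_k,\delta_k$ are ordinary derivations of the domain $R_{k-1}$ (compare coefficients in $\{x_k,ab\}$), and $\delta_k$ is locally nilpotent. Comparing $\delta_k$-degrees, $\delta_k(u)\in uR_{k-1}$ forces $\delta_k(u)=0$. Hence $y_j\in P_{k-1}$ stays Poisson-prime in $R_k$ if and only if $\delta_k(y_j)=0$. Your items (ii) and (iii) together therefore say exactly that precisely one element of $P_{k-1}$ is not killed by $\delta_k$.

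The heart of the theorem is to produce, for that $y_j$, a homogeneous $c_k\in R_{k-1}$ with $y_jx_k-c_k$ Poisson-normal in $R_k$. In particular you must show that the leading coefficient of the new Poisson-prime is a single $y_j$ to the first power, not an arbitrary monomial in $P_{k-1}$. This is also what gives $\delta_k=0\iff p(k)=-\infty$, on which your surjectivity argument for $\mu$ rests. You leave this as the ``main obstacle'', and the phrase ``$\delta_k$ kills them up to the normalizing factor'' only restates it. Once such an element is available, (i)--(iv) do follow by short arguments using the $\delta_k$-degree fact and $\lambda_k\neq0$.

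The tool you propose does not supply this element. The Poisson deleting-derivation map requires inverting $x_k$, not $P_{k-1}$. It replaces each $a\in R_{k-1}$ by a series $\sum_m c_m\delta_k^m(a)x_k^{-m}$ and leaves $x_k$ unchanged. So it does not produce a Poisson-normal element ``$x_k+\sum_m(\mathrm{coeff})\,\delta_k^m(\cdot)$'', and it does not single out an index $j$. What is needed instead is the following:
\begin{itemize}
\item a homogeneous $w\in L:=R_{k-1}[P_{k-1}^{-1}]$, of the weight of $x_k$, with $\delta_k(f)=-\sigma_k(f)w-\{f,w\}$ for all $f$ (equivalently, $x_k-w$ is Poisson-normal in $L[x_k;\sigma_k,\delta_k]_P$);
\item a proof that the denominator of $w$ is exactly one $y_j$.
\end{itemize}
Your inductive hypothesis only records the list of homogeneous Poisson-prime elements of $R_{k-1}$, and it is not set up to produce such a $w$. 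Goodearl--Yakimov get this step from their $\mathcal H$-Poisson-UFD theory.

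This is also where $\lambda_k\neq0$ genuinely enters. If $\delta_k$ killed all of $P_{k-1}$, it would be locally nilpotent on $L$. But $\delta_k(w)=-\lambda_kw^2$ gives $\delta_k^n(w)=(-1)^nn!\lambda_k^nw^{n+1}$, which forces $w=0$ and hence $\delta_k=0$.

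Two minor points. The integer $d$ ``maximal with $\delta_k^d(R_{k-1})$ not contained in a Poisson-prime ideal'' need not exist, and it is never used. Also, applying $h_k$ to $\{x_k,a\}=\sigma_k(a)x_k+\delta_k(a)$ gives $\sigma_k\delta_k-\delta_k\sigma_k=\lambda_k\delta_k$, the opposite sign to yours.
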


We record additional properties of the elements $y_k$ in the following remark.  

\begin{remark}\hfill
\label{T25}
\begin{itemize}
\item[1.] $\delta_k=0$ if and only if $p(k)=-\infty$ (see \cite[Theorem 5.5]{goodearl2023cluster}). 
\item[2.] Assume that $p(k) \neq -\infty$. Then:
\begin{enumerate}
\item it follows from \cite[Proposition 5.9]{goodearl2023cluster} that $c_k=\alpha_{k,p(k)}^{-1}\delta_k(y_{p(k)}),$ where $\sigma_k(y_{p(k)})=\alpha_{k,p(k)} y_{p(k)}$. 
\item $y_{p(k)}$ is a homogeneous Poisson-prime element of $R_{k-1}$ as $s(p(k))=k>k-1.$ Hence, $y_{p(k)}R_{k-1}$ is a  Poisson-prime ideal of $R_{k-1}.$ 
\item $c_k\not\in y_{p(k)}R_{k-1}$ (see \cite[Theorem 4.12(ii)]{goodearl2023cluster}).  
\end{enumerate}
\end{itemize}
\end{remark}

\subsection{Partially localized Poisson affine space associated to a PNA}\label{S:rec-QNA-localizations:SSplqas}

From \cite[Proposition 5.8]{goodearl2023cluster}, the subalgebra $\mathcal{A}$ of $R$ generated by the homogeneous elements $y_1, \ldots, y_N$ is a Poisson affine space associated to some  skew-symmetric matrix $Q:=(q_{ij})\in M_N(\k)$. Thus,

\begin{equation}
\label{b2}
\mathcal{A}:=\k[y_1, \ldots, y_N]
\end{equation}
is a Poisson affine space with $\{y_j, y_i\}=q_{ij}y_iy_j$, for all $i,j\in[1,N]$. The scalars $q_{ij}$ can be explicitly expressed in terms of sums of the $\lambda_{ij}$ (see \cite[5.18]{goodearl2023cluster} for more details). In particular, in case $R$ is uniparameter then all $q_{ij}\in \Q$. We denote by 
\begin{equation}
\label{torus}
\mathcal{T}:=\mathcal{A}[y_1^{-1}, \ldots, y_N^{-1}]=\k[y_1^{\pm 1}, \ldots, y_N^{\pm 1}]
\end{equation}
the Poisson torus associated to $\mathcal{A}$.

It was proved in  \cite[Proposition 5.8]{goodearl2023cluster} that the elements $y_1, \dots, y_N$ form an initial Poisson cluster for $R$ in the sense that we have the following tower of Poisson algebras: 
\begin{equation}
\label{T7}
\mathcal{A}\subseteq R\subseteq \mathcal{T}\subseteq \fract(R),
\end{equation} 
where $\fract(R)$ is the field of fractions of $R$. The relationship between $R$ on one hand and $\mathcal{A}$ (or $\mathcal{T}$) on the other hand is actually stronger, as we shall see below. 

As we will require localizations of $R$ at multiplicative sets generated by various subsets of $\{y_1, \ldots, y_N\}$, we introduce some notation. Given $I \subseteq [1,N]$, we set $Y_I:=\{y_{k}\mid k\in I\}$ and we denote by $E_I$ the multiplicative system of $R$ generated by $Y_I$. Moreover, let 
\begin{equation*}
\mathfrak{s}_{<+\infty}:=\{k\in [1, N]\mid s(k)<+\infty\} \quad\text{and}\quad \mathfrak{s}_{+\infty}:=\{k\in [1, N]\mid s(k)=+\infty\}
\end{equation*}
and set $Y_{<+\infty}:=Y_{\mathfrak{s}_{<+\infty}}$, $E_{<+\infty}:=E_{\mathfrak{s}_{<+\infty}}$, $Y_{+\infty}:=Y_{\mathfrak{s}_{+\infty}}$ and $E_{+\infty}:=E_{\mathfrak{s}_{+\infty}}$. We then deduce the last equality of the following statement from \cite[(10.1)]{goodearl2023cluster}: 
\begin{equation}
\label{R-torus}
R[E_{[1,N]}]^{-1}=R[y_1^{-1}, \ldots, y_N^{-1}]=\mathcal{T}.
\end{equation}
We will focus on a specific common localization of $\mathcal{A}$ and $R$: a straightforward induction using~\eqref{5T} shows that $RE_{<+\infty}^{-1}=\mathcal{A}E_{<+\infty}^{-1}$. More generally, suppose that $\mathfrak{s}_{<+\infty}\subseteq I\subseteq [1,N]$. Then $E_{<+\infty}\subseteq E_I$ and it is clear that we still have $RE_{I}^{-1}=\mathcal{A}E_{I}^{-1}$; so we have the following tower of algebras: 
 
\begin{equation}
\label{T20}
\mathcal{A} \subseteq R\subseteq RE_{I}^{-1}=\mathcal{A}E_{I}^{-1}\subseteq \mathcal{T}\subseteq \fract(R).
\end{equation}

This link between the PNA $R$ and the partially localized Poisson affine spaces $\mathcal{A}E_{I}^{-1}$, for appropriate choices of $\mathfrak{s}_{<+\infty}\subseteq I\subseteq [1,N]$, will allow us to attack the computation of Poisson derivations of PNAs by first computing the Poisson derivations of partially localized Poisson affine spaces and then bringing information from $RE_{I}^{-1}$ to $R$, thanks to the following result.

\begin{pro}
\label{b4}
Let $I,J \subseteq [1,N]$. Then $RE_I^{-1} \cap RE_J^{-1}=RE_{I\cap J}^{-1}$.
\end{pro}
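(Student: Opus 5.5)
The inclusion $RE_{I\cap J}^{-1}\subseteq RE_I^{-1}\cap RE_J^{-1}$ is immediate, since $E_{I\cap J}\subseteq E_I\cap E_J$. So the whole proof is the reverse inclusion, and the plan is to prove it using that the $y_k$ are prime elements of $R$ (Theorem~\ref{homo}). All localizations are viewed inside $\fract(R)$, and $R$ is a polynomial ring, hence a UFD.

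\emph{Normalizing an element.} Take $a\in RE_I^{-1}\cap RE_J^{-1}$ and write $a=r/u$ with $r\in R$ and $u=\prod_{k\in I}y_k^{m_k}\in E_I$. The $y_k$ are prime by Theorem~\ref{homo}. They are also pairwise non-associate: if $y_j=\alpha y_k$ with $j\neq k$ and $\alpha\in\k^*$, this would contradict algebraic independence of $y_1,\dots,y_N$ (see \cite[Proposition 5.8]{goodearl2023cluster}). The units of $R$ are $\k^*$. We may therefore cancel common factors and assume that for each $k\in I$ with $m_k>0$, the prime $y_k$ does not divide $r$. This representation is in lowest terms with respect to the primes $y_k$, $k\in I$.

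\emph{Comparing the two representations.} Also write $a=r'/v$ with $r'\in R$ and $v=\prod_{k\in J}y_k^{n_k}\in E_J$. Then $rv=r'u$ in $R$. Fix $k\in I\setminus J$ with $m_k>0$. Then $y_k^{m_k}$ divides $rv$. Since $y_k$ is prime and not associate to any $y_l$ with $l\in J$ (because $l\neq k$), $y_k$ does not divide $v$. Unique factorization then gives $y_k^{m_k}\mid r$, which contradicts the normalization. Hence $m_k=0$ for all $k\in I\setminus J$. So $u\in E_{I\cap J}$ and $a\in RE_{I\cap J}^{-1}$.

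\emph{Main point to check.} The only delicate step is justifying that distinct $y_k$ generate distinct prime ideals of $R$ itself, not just of $R_k$. Theorem~\ref{homo} states explicitly that each $y_k$ is a prime element of $R$. Non-associativity follows from algebraic independence, and also from homogeneity together with the recursive form~\eqref{5T}. Everything else is the standard UFD argument, and no Poisson structure is needed.
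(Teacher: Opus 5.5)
Your proof is correct and is the same approach as the paper's. The paper just says the result follows from each $y_i$ being prime in $R$; you have written out the lowest-terms/UFD argument behind that remark. The one fact you derive from algebraic independence and irreducibility, that $y_k$ does not divide $y_l$ for $k\neq l$, is also recorded in the paper as Lemma~\ref{dd}, so you could have cited it directly.
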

\begin{proof}
This easily follows from the $y_i$'s being prime in the polynomial ring $R$, see \cite[Theorem 5.5]{goodearl2023cluster}. 
\end{proof}

\subsection{Poisson-normal elements cannot be multiples of non-Poisson-primes $y_i$}

We proceed with a result that proves that $0$ is the only Poisson-normal element that is a multiple of a non-Poisson-prime $y_i$ (that is, with $s(i)\neq +\infty$). This result will be used later, namely to describe the action of a Poisson derivation of $R$ on the generators $x_i$, after we control its action on the homogeneous elements $y_i$, see Proposition~\ref{centralderivation1}. 

\begin{lemma}\cite[Lemma 10.1]{goodearl2023cluster}
\label{dd}
For all $ i, j\in [1, N]$ with $i\neq j$, we have that $y_i\not\in y_jR.$ 
\end{lemma}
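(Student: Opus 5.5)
The approach is to exploit that each $y_k$ is a prime element of the polynomial ring $R$ (Theorem~\ref{homo}), together with the fact that $y_1,\dots,y_N$ are algebraically independent and homogeneous. We argue by contradiction: suppose $y_i\in y_jR$ with $i\neq j$, say $y_i=y_j r$ for some $r\in R$. Since the group of units of $R$ is $\k^*$ and $y_i$ is prime, hence irreducible, $r$ cannot be a nonunit. Otherwise $y_i$ would be a product of two nonunits, as $y_j$ is not a unit, being prime. So $r\in\k^*$ and $y_i=\lambda y_j$ for some nonzero scalar $\lambda$. This contradicts the algebraic independence of $y_1,\dots,y_N$. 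Equivalently, it contradicts the fact that $\mathcal A=\k[y_1,\dots,y_N]$ is a polynomial algebra in these $N$ variables, as recorded in \eqref{b2} and the tower \eqref{T7}.

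The key steps in order are:
\begin{enumerate}
\item Recall from Theorem~\ref{homo} that each $y_k$ is a prime element of $R$, in particular nonzero and not a unit.
\item Note that units of $R$ are scalars, as observed after Definition~\ref{dcgl}.
\item Deduce from irreducibility that any divisibility $y_j\mid y_i$ forces $y_i$ and $y_j$ to be associates, that is, scalar multiples of each other.
\item Rule out $y_i=\lambda y_j$ by algebraic independence, since $\mathcal A$ is a Poisson affine space on exactly the $N$ generators $y_1,\dots,y_N$, so distinct $y$'s are linearly independent.
\end{enumerate}

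There is essentially no serious obstacle. The only point needing care is the justification of algebraic independence. If one prefers not to rely on it, a self-contained alternative uses \eqref{5T}. By induction, $y_k\in R_k\setminus R_{k-1}$ and $y_k$ has degree exactly $1$ in $x_k$ with leading coefficient $y_{p(k)}$ (or $1$), which is nonzero. Hence for $i\neq j$, say $i<j$, the element $y_j$ involves $x_j$ while $y_i$ does not, so $y_i\neq\lambda y_j$. I would include this degree argument as a backup.
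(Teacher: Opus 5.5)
Your argument is correct. The paper itself gives no proof of this lemma; it simply imports it from \cite[Lemma 10.1]{goodearl2023cluster}.

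You instead derive it in a few lines from facts the paper already records. Each $y_k$ is a prime, hence irreducible, element of the domain $R$ (Theorem~\ref{homo}), and $R^\times=\k^*$. Finally, $y_i\neq\lambda y_j$ for $i\neq j$, either by algebraic independence or by your backup observation that $y_k\in R_k\setminus R_{k-1}$, which follows from \eqref{5T}.

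The citation keeps the exposition short. Your route shows that the lemma is an immediate corollary of Theorem~\ref{homo}. The only substantial input is that $y_k$ is prime in all of $R$, not merely in $R_k$. This is automatic: as a commutative algebra $R=R_k[x_{k+1},\dots,x_N]$, so $R/y_kR\cong(R_k/y_kR_k)[x_{k+1},\dots,x_N]$ is a domain.

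Irreducibility is also what makes your argument uniform in $i$ and $j$. For $i<j$ your degree count alone already gives $y_i\notin y_jR$, since $y_i\in R_{j-1}$ while $\deg_{x_j}(y_jr)\geq 1$ for every $r\neq 0$. For $i>j$ a bare degree count does not settle the matter, and irreducibility of $y_i$ handles that case at once.
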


The set $Y_{+\infty}$ of homogeneous Poisson-prime elements of $R$
generates $\NS_P(R)$, the Poisson-normal subalgebra of $R$, which has Krull dimension $n$ (see \cite[Theorem 4.6]{goodearl2023cluster}). Thus, 
\begin{equation}
\label{T9}
\NS_P(R)=\k[ y_j \mid j\in \mathfrak{s}_{+ \infty} ]\subseteq \mathcal{A}
\end{equation}
is a Poisson affine space, associated to some skew-symmetric sub-matrix $Q'$  of $Q$. We remark that, although a sum of Poisson-normal elements may no longer be Poisson-normal, for all $u\in \NS_P(R)$ with $\mathcal{H}$-eigenvalue decomposition $u=u_1+\cdots+u_m$, each $u_i$ is Poisson-normal.

We are now ready to establish the following technical remarks.

\begin{pro}
\label{b5}
Let $R$ be a PNA and  $y_i\in R$ be a homogeneous element with $s(i)<+\infty$. Then $\NS_P(R) \cap y_iR = \{0\}$.
\end{pro}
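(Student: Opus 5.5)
We use the description \eqref{T9} of $\NS_P(R)$ as the polynomial algebra $\k[y_j \mid j\in\mathfrak{s}_{+\infty}]$, together with the primality of $y_i$ in $R$ and Lemma~\ref{dd}. The strategy is a degree (monomial) argument: we show that $y_i$ cannot divide any nonzero polynomial in the variables $y_j$, $j\in\mathfrak{s}_{+\infty}$. Since $s(i)<+\infty$, we have $i\notin\mathfrak{s}_{+\infty}$, so $y_i$ is not one of these variables.

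\textbf{Step 1: prime-ideal reduction.} Suppose $0\neq u\in \NS_P(R)\cap y_iR$, and write $u$ as a polynomial $u=\sum_{\alpha} a_\alpha y^\alpha$ in the $y_j$, $j\in \mathfrak{s}_{+\infty}$. Since $y_iR$ is a prime ideal (Theorem~\ref{homo}), $P:=y_iR$ is a height one prime of the polynomial ring $R$. Moreover, $y_j\notin P$ for every $j\neq i$ by Lemma~\ref{dd}.

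\textbf{Step 2: homogeneity.} Since $y_i$ is homogeneous, the ideal $P$ is graded. Hence every $\mathcal H$-homogeneous component of $u$ lies in $P$, and we may assume $u$ is homogeneous. Each homogeneous component is Poisson-normal, as noted after \eqref{T9}.

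\textbf{Step 3: normal elements are monomials.} The key claim is that a homogeneous Poisson-normal element of $\NS_P(R)$ is a scalar multiple of a monomial in the $y_j$, $j\in\mathfrak{s}_{+\infty}$. Granting this, $u=a\,y^\alpha\in P$ with $a\neq0$. Since $P$ is prime, some $y_j$ with $j\in\mathfrak{s}_{+\infty}$ lies in $P$, and since $j\neq i$ this contradicts Lemma~\ref{dd}.

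To prove the claim, first try to use that homogeneous Poisson-normal elements generate Poisson-prime-invariant data. Any homogeneous Poisson-normal $u$ generates an $\mathcal H$-stable Poisson ideal $uR$. Factor $u$ into primes in the UFD $R$. Each prime factor of a homogeneous Poisson-normal element is itself homogeneous Poisson-prime; this is the standard argument (cf.\ \cite[Sec.~4]{goodearl2023cluster}), using that $\{u,-\}\subseteq uR$ forces each prime factor $p$ of $u$ to satisfy $\{p,R\}\subseteq pR$ in characteristic $0$. By Theorem~\ref{homo} with $k=N$, each such factor is a scalar multiple of some $y_j$ with $s(j)=+\infty$, so $u$ is a monomial in these. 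This argument applies directly to any homogeneous Poisson-normal element of $R$, not only to components, so Step 3 can in fact be run without first expanding $u$ in the $y_j$.

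\textbf{Main obstacle.} The hard step is the claim in Step 3 that prime factors of homogeneous Poisson-normal elements are homogeneous Poisson-prime. If this is not cleanly available, there is a fallback. Work in the Poisson affine space $\NS_P(R)$ and use that distinct monomials $y^\alpha$ have distinct ``log-canonical weights'' $\{y^\alpha,y_j\}=(\sum_k \alpha_k q_{jk})y^\alpha y_j$, which separates them. This would show directly that a homogeneous normal element is a single monomial, provided the weights distinguish monomials of the same $\mathcal H$-degree.
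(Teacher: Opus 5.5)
Your argument is correct and follows the same route as the paper. You reduce to a homogeneous Poisson-normal $u\in y_iR$, show that it is a scalar times a monomial in the $y_j$ with $s(j)=+\infty$, and conclude using the primeness of $y_i$ together with Lemma~\ref{dd}. The one difference is that you prove the factorization step directly instead of citing the $\mathcal{H}$-Poisson-UFD property from \cite{goodearl2023cluster}. You use that prime factors of a Poisson-normal element are Poisson-normal in characteristic $0$, and that factors of homogeneous elements are homogeneous because the grading group $\Z^n$ is torsion-free. This direct argument works, so your fallback via log-canonical weights is not needed.
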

\begin{proof}
Suppose that $0\neq u\in \NS_P(R) \cap y_iR$. Standard arguments show that, without loss of generality, we can assume that $u=y_iv$ with $u$ and $v$ both $\ch$-eigenvectors and $u$ Poisson-normal. From \cite[Theorem 5.2]{goodearl2023cluster}, we have that the PNA $R$ is an $\ch$-Poisson-UFD, and so it follows from 
\cite[Proposition 4.1]{goodearl2023cluster} that $u= \lambda \prod_{s(j)=+\infty} y_j^{e_j}$, with $\lambda\in\k^*$ and $e_j\in\Z_{\geq 0}$, since the homogeneous Poisson-prime elements of $R$ are the $y_j$ with $s(j)=+\infty$. Hence, $y_iv=  \lambda \prod_{s(j)=+\infty} y_j^{e_j}$ and the primeness of $y_i$ in $R$ (see~\cite[Theorem 5.5]{goodearl2023cluster})  implies that there exists $j$ such that $s(j)=+\infty$ and $i=j$. This contradicts our hypothesis that $s(i)<+\infty$. 
\end{proof}

\begin{pro}
\label{proposition:center in normal}
$\ZC_P(\mathcal{T})= \ZC_P(RE_{+\infty}^{-1})=\{z\in \NS_P(R)E_{+\infty}^{-1}\mid \{R, z\}=0 \}.$
\end{pro}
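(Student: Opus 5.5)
We will prove the chain of inclusions
$$\{z\in \NS_P(R)E_{+\infty}^{-1}\mid \{R,z\}=0\}\subseteq \ZC_P(RE_{+\infty}^{-1})\subseteq \ZC_P(\mathcal{T})\subseteq \{z\in \NS_P(R)E_{+\infty}^{-1}\mid \{R,z\}=0\}.$$

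\emph{First inclusion.} Since $Y_{+\infty}\subseteq R$, we have $\NS_P(R)E_{+\infty}^{-1}\subseteq RE_{+\infty}^{-1}$. An element $z$ with $\{R,z\}=0$ also Poisson-commutes with inverses of elements of $E_{+\infty}$, because $\{z,e^{-1}\}=-e^{-2}\{z,e\}=0$. Hence $z\in\ZC_P(RE_{+\infty}^{-1})$.

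\emph{Second inclusion.} By \eqref{R-torus}, $\mathcal{T}$ is a localization of $RE_{+\infty}^{-1}$. The same computation shows that an element Poisson-commuting with $RE_{+\infty}^{-1}$ also Poisson-commutes with the inverses $y_i^{-1}$, and therefore with all of $\mathcal{T}$.

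\emph{Third inclusion.} This is the main step. Let $z\in\ZC_P(\mathcal{T})$. Since $R\subseteq\mathcal{T}$, clearly $\{R,z\}=0$; what must be shown is that $z\in \NS_P(R)E_{+\infty}^{-1}=\k[y_j^{\pm1}\mid s(j)=+\infty]$, using \eqref{T9}.

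Write $z$ as a Laurent polynomial $z=\sum_{\mathbf{a}} \alpha_{\mathbf a} y^{\mathbf a}$. Because $\mathcal{T}$ is a Poisson torus, we have $\{y_i,y^{\mathbf a}\}=(\sum_j a_j q_{ji})\,y_iy^{\mathbf a}$. So $z$ is Poisson-central if and only if each monomial $y^{\mathbf a}$ with $\alpha_{\mathbf a}\neq0$ is Poisson-central. It therefore suffices to show that a Poisson-central Laurent monomial $y^{\mathbf a}$ has $a_i=0$ whenever $s(i)<+\infty$.

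Take such a monomial and choose $e\in E_{+\infty}$, a monomial in the $y_j$ with $s(j)=+\infty$, together with a monomial $m$ in the $y_i$ with $s(i)<+\infty$, such that $u:=y^{\mathbf a}\,e\,m$ is a monomial in $\mathcal{A}$ of the form
$$u=\Bigl(\prod_{s(j)=+\infty}y_j^{b_j}\Bigr)\Bigl(\prod_{s(i)<+\infty} y_i^{c_i}\Bigr), \qquad b_j,c_i\geq 0.$$
The aim is to show that $c_i=0$ for all $i$. The difficulty is that the exponents in $\mathbf a$ with $s(i)<+\infty$ may be negative, so clearing denominators naively does not produce a Poisson-normal element of $R$.

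Here is the idea for handling this. Since $y^{\mathbf a}$ is Poisson-central in $\mathcal{T}$, so is its inverse. Split $\mathbf a=\mathbf a^+-\mathbf a^-$ into positive and negative parts over the indices with $s(i)<+\infty$. Also split $y^{\mathbf a}=w\cdot P\cdot N^{-1}$, where $w\in \NS_P(R)E_{+\infty}^{-1}$ is the $Y_{+\infty}$-part, $P=\prod y_i^{a_i^+}$ and $N=\prod y_i^{a_i^-}$. Each $y_j$ with $s(j)=+\infty$ is Poisson-normal, and a product of monomials in the $y$'s is log-canonical with everything in $\mathcal{T}$. Combining this with the centrality of $y^{\mathbf a}$, we find that $P$ and $N$ satisfy $\{r,P\}/P=\{r,N\}/N-\{r,w\}/w$ for all $r\in R$.

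Multiplying $w$ by a suitable element of $E_{+\infty}$ to make it a polynomial $w'$, the element $w'P$ satisfies $\{r,w'P\}=w'P\cdot\bigl(\{r,N\}/N+c(r)\bigr)$. We need this to say that $w'P$ is Poisson-normal in $R$ modulo the factor $N$. The cleanest route is to apply centrality in $R$ directly: $w'P\in R$ and $\{r,w'P\}\in w'P\cdot R N^{-1}$. This needs an argument that $\{r,w'P\}\in w'PR$, using that $RE_{+\infty}^{-1}\cap \mathcal{T}$-denominators are controlled by Proposition~\ref{b4}, or the primeness of the $y_i$ in $R$ together with Lemma~\ref{dd}.

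Once $w'P$ (or $w'N$) is known to be Poisson-normal in $R$, Proposition~\ref{b5} forces $P=1$ (respectively $N=1$), since otherwise a Poisson-normal element would lie in $y_iR$ with $s(i)<+\infty$. Therefore $\mathbf a$ is supported on $\mathfrak{s}_{+\infty}$, and $z\in \NS_P(R)E_{+\infty}^{-1}$.

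I expect the denominator-clearing step — establishing genuine Poisson-normality in $R$ rather than in $\mathcal{T}$ — to be the main obstacle. If it resists, the alternative I would try is to use the $\ch$-Poisson-UFD property \cite[Proposition 4.1]{goodearl2023cluster}. Poisson-centrality of $z$ in $\fract(R)=\fract(RE_{+\infty}^{-1})$ should force $z$ to be a quotient of Poisson-normal homogeneous elements of $R$. Such elements are products of the $y_j$ with $s(j)=+\infty$, and comparing with the Laurent monomial expression via uniqueness of factorization in $\mathcal{T}$ would then give the claim.
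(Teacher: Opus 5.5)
Your first two inclusions and the reduction to a single Poisson-central Laurent monomial $y^{\mathbf a}=wPN^{-1}$ are correct. The third inclusion, however, is not proved. The step that carries all of its content is showing that $w'P$ (equivalently $N$) is Poisson-normal in $R$, and you only flag it as ``needs an argument'' with pointers that are never turned into one. As written, the reduction makes no progress. By your own identity $\{r,w'P\}=w'P\bigl(\{r,e\}/e+\{r,N\}/N\bigr)$ with $\{r,e\}/e\in R$, the condition $\{r,w'P\}\in w'PR$ for all $r\in R$ is equivalent to $\{r,N\}\in NR$ for all $r$. That says $N$ is Poisson-normal, which by Proposition~\ref{b5} is already equivalent to the conclusion $N=1$ you want. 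Your fallback via the $\ch$-Poisson-UFD property (``should force $z$ to be a quotient of Poisson-normal homogeneous elements'') also restates the claim without giving a mechanism.

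The missing idea is short, and it is the precise form of your hint about primeness of the $y_i$ and Lemma~\ref{dd}. Let $S_P,S_N\subseteq\mathfrak{s}_{<+\infty}$ be the disjoint supports of $P$ and $N$. Centrality of $y^{\mathbf a}$ gives, for $r\in R$, $g:=\{r,N\}/N=\{r,P\}/P+\{r,w\}/w$. Moreover $\{r,w\}/w=\sum_j a_j\{r,y_j\}/y_j\in R$, because each $y_j$ with $s(j)=+\infty$ is Poisson-normal. Hence $g\in RE_{S_N}^{-1}\cap RE_{S_P}^{-1}=RE_{\emptyset}^{-1}=R$ by Proposition~\ref{b4}. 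So both $N$ and $P$ are Poisson-normal in $R$, and Proposition~\ref{b5} forces $N=P=1$.

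The paper closes the gap differently and does not use Proposition~\ref{b5}. It takes the nonzero $\ch$-invariant Poisson ideal $J=\{r\in R\mid y^{\mathbf a}r\in R\}$ and uses noetherianity with the $\ch$-Poisson-UFD property to find $c\in J\cap E_{+\infty}$. Then $y^{\mathbf a}c\in R$ is Poisson-normal, since $y^{\mathbf a}$ is central and $c$ is normal. This gives $y^{\mathbf a}=(y^{\mathbf a}c)c^{-1}\in\NS_P(R)E_{+\infty}^{-1}$ directly.
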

\begin{proof}
Observe that $\mathcal{T}=RE_{<\infty}^{-1}E_{+\infty}^{-1}=RE_{+\infty}^{-1}E_{<\infty}^{-1}$. The chain of inclusions $\{z\in \NS_P(R)E_{+\infty}^{-1}\mid \{R, z\}=0 \}\subseteq \ZC_P(RE_{+\infty}^{-1}) \subseteq \ZC_P(\mathcal{T})$ is trivial. For the reverse inclusion, it is enough to prove that 
$\ZC_P(\mathcal{T}) \subseteq \{z\in \NS_P(R)E_{+\infty}^{-1}\mid \{R, z\}=0\}.$

 Note that $\ZC_P(\mathcal{T})$ is the $\k$-span of the Poisson-central  monomials in the generators $y_1^{\pm 1}, \ldots, y_N^{\pm 1}$ of $\mathcal{T}.$ Hence, without loss of generality, we can assume that $w\in \ZC_P(\mathcal{T})$ is a Poisson-central Laurent monomial. Therefore, $w$ is an $\mathcal{H}$-eigenvector. Now, the set 
$J:=\{r\in R \mid wr\in R\}$ is a nonzero Poisson $\mathcal{H}$-invariant ideal of $R$. By a standard noetherianity argument, we obtain the existence of  $c\in J \cap E_{+\infty}$. Since $\{w, R\}=0$ and $c\in E_{+\infty} \subseteq \NS_P(R),$ we have that $a:=wc \in \NS_P(R).$ Hence  $w=ac^{-1} \in \{z\in \NS_P(R)E_{+\infty}^{-1}\mid \{R, z\}=0\}$, as desired. 
\end{proof}


\section{Structure theorem for Poisson derivations of partially localized Poisson affine spaces}
\label{section: derivations partially localized P-affine spaces}
An important invariant of a Poisson algebra $A$ is its Poisson cohomology. In low degrees, these Poisson cohomology groups have concrete interpretations. For instance, the Poisson center of $A$ is its Poisson cohomology group of degree zero, and  the Poisson cohomology group of degree one can be identified with the Lie algebra of Poisson derivations modulo its ideal $\innder(A)=\{\innder_x\mid x\in A\}$ of Hamiltonian derivations.  

Our aim in this section is to compute the Poisson derivations of localizations of a Poisson affine space at the multiplicative system generated by a specific subset of the generators. Before doing so, we recall that Poisson derivations of Poisson tori $\mathcal{T}:=\k[x_1^{\pm 1}, \ldots, x_n^{\pm 1}]$ were computed in \cite[Theorem 2.6]{lo}, where it was proved that every Poisson derivation $D$ of $\mathcal{T}$ can be expressed uniquely as
$$D=\innder_x+ \delta,$$
for some $x\in \mathcal{T}$,
where $\delta$ is a central Poisson derivation of $\mathcal{T}$, that is, a Poisson derivation that acts on the canonical generators $x_i$ of the Poisson torus as multiplication by Poisson-central elements. 
The results of this section are the first step towards our goal to compute the Poisson derivations of a PNA via the tower of Poisson algebras given in~\eqref{T20}.

\begin{theorem}
\label{T:generic}
Let $A$ be a finitely generated Poisson $\k$-algebra such that
\begin{align*}
\der(A)=\innder(A)\oplus M,
\end{align*}
where $M$ is a $\ZC_P(A)$-module. Let $R=A[x_1, \ldots, x_m]$ be a Poisson affine space over $A$, where $x_1, \ldots, x_m$ are Poisson-central elements of $R$. Then:
\begin{enumerate}
\item $\ZC_P(R)=\ZC_P(A)[x_1, \ldots, x_m]\simeq \ZC_P(A)\otimes_{\k}\k[x_1, \ldots, x_m]$;
\item $\der (R) = \innder (R) \oplus \overline M \oplus \bigoplus_{j=1}^m \ZC_P(R) \partial_j$, where
\begin{itemize}
\item $\overline M=\ZC_P(R)M\simeq \ZC_P(R)\otimes_{\ZC_P(A)} M\simeq \k[x_1, \ldots, x_m]\otimes_\k M$, so that $D\in M\subseteq\der(A)$ is extended to a Poisson derivation of $R$ by setting $D(x_i)=0$, for all $i\in [1,m]$;
\item $\partial_j$ is the derivation of $R$ defined by $\partial_j(A)=0$ and $\partial_j(x_i)=\delta_{ij}$, for all $i, j\in [1,m]$.
\end{itemize}
\end{enumerate}
\end{theorem}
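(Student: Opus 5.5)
The approach is to use that $R=A[x_1,\ldots,x_m]$ with all $x_i$ Poisson-central. In that case the bracket on $R$ is just the bracket of $A$ extended $\k[x_1,\ldots,x_m]$-bilinearly, i.e.
$$\Bigl\{\sum_\alpha a_\alpha x^\alpha,\sum_\beta b_\beta x^\beta\Bigr\}=\sum_{\alpha,\beta}\{a_\alpha,b_\beta\}x^{\alpha+\beta}$$
in multi-index notation. Everything then reduces to comparing coefficients of the monomials $x^\alpha$, which form a basis of $R$ as a free $A$-module.

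For (1), I take $z=\sum_\alpha a_\alpha x^\alpha$. Since the $x_i$ are central, $z$ is Poisson-central iff $\{z,b\}=\sum_\alpha\{a_\alpha,b\}x^\alpha=0$ for all $b\in A$. By freeness this holds iff every $a_\alpha\in\ZC_P(A)$. This gives $\ZC_P(R)=\ZC_P(A)[x_1,\ldots,x_m]$, and the tensor description follows again from freeness.

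For (2), I first check that each summand consists of Poisson derivations.
\begin{itemize}
\item For $D\in M$, its extension with $D(x_i)=0$ is $D\otimes\mathrm{id}$ on $A\otimes\k[x_1,\ldots,x_m]$, which is Poisson because the bracket is the $A$-bracket tensored with multiplication.
\item $\partial_j$ is $\mathrm{id}\otimes\partial/\partial x_j$, and it is Poisson for the same reason.
\item Multiplying a Poisson derivation by a Poisson-central element gives a Poisson derivation.
\end{itemize}
Since $\ZC_P(A)M\subseteq M$, we have $\ZC_P(R)M=\k[x_1,\ldots,x_m]M$.

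Now let $D\in\der(R)$ be arbitrary.
\begin{itemize}
\item \emph{Coefficient maps.} For $a\in A$ I write $D(a)=\sum_\alpha D_\alpha(a)x^\alpha$ with $D_\alpha\colon A\to A$ linear. Expanding $D(ab)$ and $D(\{a,b\})$ and comparing coefficients of $x^\alpha$ shows that each $D_\alpha$ is a derivation of $A$ satisfying the Poisson condition, so $D_\alpha\in\der(A)$.
\item \emph{Finiteness.} Because $A$ is finitely generated and a derivation is determined by its values on generators, only finitely many $D_\alpha$ are nonzero.
\item \emph{Values on the $x_j$.} Applying $D$ to $\{x_j,b\}=0$ gives $\{D(x_j),b\}=-\{x_j,D(b)\}=0$ for all $b\in A$. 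Together with the centrality of the $x_i$, this shows $D(x_j)\in\ZC_P(R)$.
\item \emph{Decomposition.} Using the hypothesis, I write $D_\alpha=\innder_{u_\alpha}+m_\alpha$ with $u_\alpha\in A$ and $m_\alpha\in M$, and set $u=\sum_\alpha u_\alpha x^\alpha\in R$. Then
$$D'=D-\innder_u-\sum_\alpha x^\alpha m_\alpha-\sum_j D(x_j)\partial_j$$
vanishes on $A$ by construction, since $\innder_u(a)=\sum_\alpha\{u_\alpha,a\}x^\alpha$. It also vanishes on each $x_j$, since $\innder_u(x_j)=0$ and $m_\alpha(x_j)=0$. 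Hence $D'=0$.
\end{itemize}

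For directness, suppose $\innder_u+\sum_\alpha x^\alpha m_\alpha+\sum_j z_j\partial_j=0$. Evaluating at $x_j$ gives $z_j=0$. Restricting to $A$ and taking coefficients gives $\innder_{u_\alpha}+m_\alpha=0$ on $A$, so $m_\alpha=0$ and $\innder_{u_\alpha}=0$ on $A$ by directness in $A$. Consequently $\innder_u$ vanishes on $A$ and on the $x_j$, so $\innder_u=0$. The same coefficient argument shows that $\sum_\alpha x^\alpha\otimes m_\alpha\mapsto\sum_\alpha x^\alpha m_\alpha$ is injective. This gives $\overline M\simeq\k[x_1,\ldots,x_m]\otimes_\k M\simeq\ZC_P(R)\otimes_{\ZC_P(A)}M$, using $\ZC_P(R)=\ZC_P(A)\otimes\k[x_1,\ldots,x_m]$.

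The step I expect to need the most care is the finiteness of the expansion $D=\sum_\alpha D_\alpha x^\alpha$ on $A$, which is where finite generation of $A$ is used. I also need $M$ to be a $\ZC_P(A)$-submodule of $\der(A)$, so that the decomposition can be applied to each $D_\alpha$ separately and recombined.
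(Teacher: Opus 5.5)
Your proposal is correct and follows essentially the same route as the paper: you compare coefficients in the free $A$-basis $\{x^\alpha\}$ to identify $\ZC_P(R)$, expand $D$ on $A$ into finitely many Poisson derivations $D_\alpha\in\der(A)$ (using finite generation), split each one via $\der(A)=\innder(A)\oplus M$, and obtain directness by evaluating first at the $x_j$ and then on $A$. The differences are cosmetic. You subtract $\sum_j D(x_j)\partial_j$ together with the other pieces instead of first normalizing to $D(x_j)=0$. You also make explicit the injectivity behind $\overline M\simeq\k[x_1,\ldots,x_m]\otimes_\k M$ and the implicit assumption that $M$ is a $\ZC_P(A)$-submodule of $\der(A)$, both of which the paper leaves unstated.
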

\begin{proof}
For all $\underline{\alpha}=(\alpha_1, \dots, \alpha_m) \in (\Z_{\geq 0})^m$, we set $ \underline{x}^{\underline{\alpha}}:=x_1^{\alpha_1} \cdots x_m^{\alpha_m}$.

First, let $z=\sum z_{\underline{\alpha}} \underline{x}^{\underline{\alpha}}$ be a Poisson-central element of $R$, where the sum runs over all $\underline{\alpha}=(\alpha_1, \dots, \alpha_m) \in (\Z_{\geq 0})^m$ and where all but a finite number of  $z_{\underline{\alpha}} \in A$ are zero. 
Since $\{x_i, x_j\}=0$ for all $i,j\in [1,m]$, one can easily check that $z$ is central if and only if $\{z , a\} = 0$ for all $a\in A$, that is, if and only if $\{z_{\underline{\alpha}}, a\}=0$ for all $a\in A$ and all $z_{\underline{\alpha}}$. 
Thus, $z$ is Poisson-central if and only if $z_{\underline{\alpha}} \in \ZC_P(A)$ for all $\underline{\alpha}$, and the first claim follows. 

It is easy to check that the $\partial_j$ define Poisson derivations of $R$ and that a  Poisson derivation $D$ of $A$ can be uniquely extended to a Poisson derivation of $R$ by setting $D(x_i)=0$ for all $i\in [1,m]$. Moreover, under such an extension, $zD\in\der(R)$ for all $z\in \ZC_P(R)$.

Now, let $D\in\der(R)$. Since $D(\ZC_P(R))\subseteq \ZC_P(R)$, we have $z_j:= D(x_j)\in \ZC_P(R)$ for all $j\in [1,m]$. Thus, replacing $D$ with $D-\sum_{j=1}^m z_j \partial_j\in\der(R)$, we can assume, without loss of generality, that $D(x_i)=0$ for all $i\in[1,m]$. For $a\in A$, we can write \[D(a)=\sum_{\underline{\alpha}} D_{\underline{\alpha}}(a)\underline{x}^{\underline{\alpha}},\] a finite sum with $D_{\underline{\alpha}}(a)\in A$ for all ${\underline{\alpha}}$. It is straightforward to check that the maps $D_{\underline{\alpha}}$ are in fact Poisson $\k$-derivations of $A$. Since $A$ is finitely generated as a $\k$-algebra, it follows that there is a finite set $K\subseteq (\Z_{\geq 0})^m$ such that
\begin{align*}
D=\sum_{\underline{\alpha}\in K} D_{\underline{\alpha}} \underline{x}^{\underline{\alpha}},
\end{align*}
where $D_{\underline{\alpha}}$ is extended to a Poisson derivation of $R$ as explained above, with $D_{\underline{\alpha}}(x_i)=0$ for all $i\in [1,m]$. By hypothesis, for each $\underline{\alpha}\in K$, there exist $u_{\underline{\alpha}}\in A$ and $E_{\underline{\alpha}}\in M$ such that $D_{\underline{\alpha}}=\ad_{u_{\underline{\alpha}}}+E_{\underline{\alpha}}$. Putting all of these together shows that
\begin{align*}
D=\ad_u+\sum_{\underline{\alpha}\in K} E_{\underline{\alpha}}\underline{x}^{\underline{\alpha}} \in\innder(R)+\overline M,
\end{align*}
where $u=\sum_{\underline{\alpha}\in K}u_{\underline{\alpha}}\underline{x}^{\underline{\alpha}} \in R$.

At this stage, we have proved that
\begin{align*}
\der (R) = \innder (R) + \overline M + \sum_{j=1}^m \ZC_P(R) \partial_j.
\end{align*}

To prove the direct sum decomposition in the statement, assume that 
\begin{align}\label{E:1:T:general}
\ad_u+E+ \sum_{j=1}^m z_j \partial_j=0,
\end{align}
with $u\in R$, $z_j\in \ZC_P(R)$ and $E=\sum_{\underline{\alpha}} E_{\underline{\alpha}}\underline{x}^{\underline{\alpha}} $, a finite sum with $E_{\underline{\alpha}}\in M$ for every ${\underline{\alpha}}$. Evaluating~\eqref{E:1:T:general} at $x_k$ leads to $z_k=0$, for all $k\in [1,m]$.

Write $u=\sum_{\underline{\alpha}}u_{\underline{\alpha}}\underline{x}^{\underline{\alpha}}$, a finite sum with $u_{\underline{\alpha}}\in A$, for all ${\underline{\alpha}}$. It follows that 
$$\sum_{\underline{\alpha}}\left(\ad_{u_{\underline{\alpha}}}+E_{\underline{\alpha}} \right) \underline{x}^{\underline{\alpha}}=0.$$
 Since $u_{\underline{\alpha}}\in A$ and $E_{\underline{\alpha}}\in M$, we have $\left(\ad_{u_{\underline{\alpha}}}+E_{\underline{\alpha}} \right)(A)\subseteq A$, thus evaluating at an arbitrary $a\in A$ we deduce that $\ad_{u_{\underline{\alpha}}}+E_{\underline{\alpha}}=0$ as a Poisson derivation of $A$, for all $\underline{\alpha}$. Now, from $\der(A)=\innder(A)\oplus M $, we deduce that $\ad_{u_{\underline{\alpha}}}=0=E_{\underline{\alpha}}$ as Poisson derivations of $A$. But also $\ad_{u_{\underline{\alpha}}}(x_i)=0=E_{\underline{\alpha}}(x_i)$ for all $i\in [1,m]$, so $\ad_{u_{\underline{\alpha}}}=0=E_{\underline{\alpha}}$ as Poisson derivations of $R$, for all $\underline{\alpha}$. We conclude that
\begin{align*}
\ad_u= \sum_{\underline{\alpha}}\underline{x}^{\underline{\alpha}}\ad_{u_{\underline{\alpha}}}=0=
\sum_{\underline{\alpha}}\underline{x}^{\underline{\alpha}} E_{\underline{\alpha}}=E,
\end{align*}
as desired. 
\end{proof}

Recall that a Poisson torus $\mathcal{T}$ is simple if and only if its center is reduced to $\k$, by \cite[Corollary 2.7]{lo}.

\begin{cor}
\label{prop: partially-localized-q-affine-spaces}
Let $\mathcal{T}:=\k[t_1^{\pm 1}, \ldots, t_n^{\pm 1}]$ be a simple Poisson torus. Set $R:= \mathcal{T}[x_1, \dots , x_m]$, a Poisson affine space over $\mathcal{T}$, where $x_1, \ldots, x_m$ are Poisson-central elements of $R$. Then: 
\begin{enumerate} 
\item $\ZC_P(R)= \k[x_1, \dots, x_m]$;
\item $\der (R) = \innder (R) \oplus \bigoplus_{i=1}^n \ZC_P(R) D_i \oplus \bigoplus_{j=1}^m \ZC_P(R) \partial_j$, where $D_i$ and $\partial_j$ are the Poisson derivations of $R$ defined by: 
$$D_i(t_k)= \delta_{ik}t_k \mbox{ and } D_i(x_k)=0;$$
 $$\partial_j(t_k)= 0 \mbox{ and } \partial_j(x_k)=\delta_{jk}.$$
\end{enumerate}
\end{cor}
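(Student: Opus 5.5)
The plan is to apply Theorem~\ref{T:generic} with $A=\mathcal{T}$. Two ingredients are needed. First, $\mathcal{T}$ is finitely generated as a $\k$-algebra, by $t_1^{\pm1},\dots,t_n^{\pm1}$. Second, we need a decomposition $\der(\mathcal{T})=\innder(\mathcal{T})\oplus M$ with $M$ a $\ZC_P(\mathcal{T})$-module. Since $\mathcal{T}$ is simple, \cite[Corollary 2.7]{lo} gives $\ZC_P(\mathcal{T})=\k$. The description of Poisson derivations of Poisson tori in \cite[Theorem 2.6]{lo} says every $D\in\der(\mathcal{T})$ can be written uniquely as $\innder_x+\delta$, where $\delta$ is a central Poisson derivation, so $\delta(t_k)=z_kt_k$ with $z_k\in\ZC_P(\mathcal{T})=\k$.

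Accordingly, I take $M=\bigoplus_{i=1}^n\k D_i$, where $D_i$ is the Euler-type derivation $D_i(t_k)=\delta_{ik}t_k$. I would check directly that each $D_i$ is a Poisson derivation of $\mathcal{T}$. On generators, $\{t_k,t_l\}=\lambda_{kl}t_kt_l$, and $D_i$ multiplies any Laurent monomial $t^{\underline a}$ by $a_i$, which is additive under multiplication; the bracket of two monomials is a scalar multiple of their product, so the Leibniz compatibility holds. A central derivation with $\delta(t_k)=z_kt_k$ is then exactly $\sum_k z_kD_k$. Together with the uniqueness in \cite[Theorem 2.6]{lo}, this gives $\der(\mathcal{T})=\innder(\mathcal{T})\oplus M$ with $M$ a $\k$-vector space, that is, a $\ZC_P(\mathcal{T})$-module. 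The $D_i$ are $\k$-linearly independent, as seen by evaluating at $t_k$.

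Theorem~\ref{T:generic}(1) then gives $\ZC_P(R)=\k[x_1,\dots,x_m]$, which is claim (1). Theorem~\ref{T:generic}(2) gives $\der(R)=\innder(R)\oplus\overline M\oplus\bigoplus_j\ZC_P(R)\partial_j$, where $\overline M\simeq\k[x_1,\dots,x_m]\otimes_\k M=\bigoplus_i\ZC_P(R)D_i$, with $D_i$ extended by $D_i(x_k)=0$. To see that the sum $\sum_i\ZC_P(R)D_i$ is direct, suppose $\sum_i z_iD_i=0$ with $z_i\in\ZC_P(R)$. Evaluating at $t_k$ gives $z_kt_k=0$, so $z_k=0$ because $R$ is a domain. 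This is claim (2).

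The only real subtlety is the identification of $M$: one has to make sure that "central derivation" in \cite{lo} means exactly $\ZC_P(\mathcal{T})$-combinations of the $D_i$, and that uniqueness there gives directness against $\innder(\mathcal{T})$. Both follow once simplicity reduces the center to $\k$.
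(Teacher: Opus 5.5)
Your proposal is correct and follows essentially the same route as the paper. You apply Theorem~\ref{T:generic} with $A=\mathcal{T}$, use $\ZC_P(\mathcal{T})=\k$ from simplicity, and take $M=\bigoplus_{i=1}^n \k D_i$, justified by the unique inner-plus-scalar decomposition of Poisson derivations of a simple Poisson torus from \cite{lo}. Your extra checks — that the $D_i$ are Poisson derivations, and that $\sum_i \ZC_P(R) D_i$ is direct by evaluating at the $t_k$ — are routine details the paper leaves implicit.
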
 
\begin{proof}
The proof follows directly from Theorem~\ref{T:generic} applied to $A=\mathcal{T}$, by noting that $\mathcal{T}$, being simple, has trivial center, and invoking \cite[Corollary 2.7]{lo}, which shows that every Poisson derivation of a simple Poisson torus is uniquely the sum of an inner Poisson derivation and a scalar Poisson derivation, that is, a Poisson derivation that acts by scalar multiplication on the generators of the Poisson torus.
\end{proof}

\section{Poisson centers}
\label{section: centers}

The Poisson center of a Poisson algebra is its Poisson cohomology group of degree zero and it constitutes an important invariant subalgebra which acts on its Poisson Lie algebra of Poisson derivations and on the first Poisson cohomology group.
In this section, we will be concerned with the Poisson centers of the PNA $R$, the Poisson affine space $\mathcal{A}$, the Poisson torus $\mathcal{T}$, and of certain localizations of these.

The first observation is that, since
\begin{equation*}
\ZC_P(R)\subseteq  \NS_P(R)\subseteq \mathcal{A}\subseteq R\subseteq \mathcal{T},
\end{equation*}
where $\NS_P(R)$ is the Poisson-normal subalgebra of $R$ introduced in~\eqref{T9}, it follows that 
\begin{equation}\label{E:chain:center:R:Aq:Tq}
\ZC_P(R)=\ZC_P(\mathcal{A})=\ZC_P(\mathcal{T})\cap \mathcal{A}.
\end{equation}

Note  that $\ZC_P(\mathcal{T})=\k[z_1^{\pm 1}, \ldots, z_\ell^{\pm 1}]$, for some $0\leq\ell\leq n$, and the $z_i$ can be chosen to be monomials in the $y_j^{\pm 1}$, with $j\in \mathfrak{s}_{+\infty}$ (see Proposition~\ref{proposition:center in normal}). In case $\ZC_P(\mathcal{T})=\k$, which is a possibility, the convention is that $\ell=0$.

We are looking for situations in which we are able to conclude, among other properties, that $\ZC_P(\mathcal{A})=\k[z_1, \ldots, z_\ell]$.

\begin{example}
Let $R=\mathcal A$ be the Poisson affine space associated to the matrix $\Lambda=
\begin{psmallmatrix}
0& 2 & 3\\
-2 &0 & 5\\
-3 &-5 &0 \\ 
\end{psmallmatrix}.
$  Then $R$ is a PNA of rank $3$ with $\ZC_P(\mathcal{T})=\k[z^{\pm 1}]$, with $z=x_1^5 x_2^{-3} x_3^{2}$, so $\ZC_P(\mathcal{A})=\k$.
\end{example}

In contrast with the example above, with many other PNAs, including some of the PNAs related to Bott--Samelson  varieties (see~\cite{elek-lu} or example \ref{E:B-S}), it is possible to choose the generators $z_i$ of the  Poisson-Laurent polynomial ring $\ZC_P(\mathcal{T})$ so that $\ZC_P(\mathcal{A})=\k[z_1, \ldots, z_\ell]$ (see Remark~\ref{R:rem:on:hyp} below).

To avoid technicalities, which may obscure the general idea, \textbf{we will assume that all Poisson-normal elements of $R$ are Poisson-central, i.e.\ that $\NS_P(R)=\ZC_P(R)$}. Then it follows from Proposition~\ref{proposition:center in normal} that $$\ZC_P(\mathcal{T})=\k[y_j^{\pm 1}\mid j\in \mathfrak{s}_{+\infty}]$$ is the Poisson torus associated to the normal subalgebra $\NS_P(R)$. Consequently, $\ZC_P(R)=\ZC_P(\mathcal{A})=\NS_P(R)$, $\ell=n=\rk(R)$ and we can take $\{z_1, \ldots, z_n\}=Y_{+\infty}$.

\begin{remark}[\textit{cf.} Example~\ref{E:B-S}]\label{R:rem:on:hyp}\hfill
\begin{enumerate}
\item The hypothesis $\NS_P(R)=\ZC_P(R)$ covers the coordinate rings of $\mathcal{O}^{\mathbf{u}}$ when $\bf{u} =w_0=-1$ (where as usual $w_0$ denotes the longest element of the Weyl group $W$ of a complex simple Lie algebra). See Example \ref{E:B-S} for more details.
\item A more general situation can be considered so as to cover all the cases of the PNAs $\mathcal{O}^{\mathbf{u}}$ when $\bf{u} =w_0$ for $\mathfrak{g}$ of arbitrary finite type, see~\cite[Section 4]{LLO25} for details. The methods employed there can be easily transposed and adapted to the PNA setting, at the cost of several technicalities.
\end{enumerate}
\end{remark}

So assume that $\NS_P(R)=\ZC_P(R)$. For ease of notation, set $E:=E_{<+\infty}$, the multiplicative set in $R$ generated by all the $y_i$ that are not Poisson-normal in $R$. Set
\begin{equation*}
\widehat{\mathcal{T}}:=\k[y_i^{\pm 1}\mid i\in \mathfrak{s}_{<+\infty}],
\end{equation*}
the Poisson torus of rank $N-n$ generated by the variables in $Y_{<+\infty}$, associated to the appropriate skew-symmetric submatrix $\widehat Q$ of $Q=(q_{ij})\in M_N(\k)$. Finally, set $\widehat{R}=RE^{-1}$.

\begin{pro}\label{P:ZCs}
Assume that $R$ is a uniparameter PNA with $\NS_P(R)=\ZC_P(R)$. Then we have the following:
\begin{enumerate}
\item $\widehat{R}=\mathcal{A}E^{-1}\simeq\widehat{\mathcal{T}}\otimes\ZC_P(R)$;\label{P:ZCs:1}
\item $\mathcal{T}\simeq\widehat{\mathcal{T}} \otimes\ZC_P(\mathcal T)$;\label{P:ZCs:2}
\item $\ZC_P(\widehat{\mathcal{T}})=\k$;\label{P:ZCs:3}
\item $\ZC_P(B)=\ZC_P(R)$, for any Poisson subalgebra $B$ such that $\mathcal{A} \subseteq  B\subseteq \widehat R$.\label{P:ZCs:4} 
\end{enumerate}
In particular, $N-n$ is even.
\end{pro}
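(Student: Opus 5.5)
The plan is to read everything off the explicit description of the Poisson centers available under the standing hypothesis $\NS_P(R)=\ZC_P(R)$. By Proposition~\ref{proposition:center in normal} and the discussion following Remark~\ref{R:rem:on:hyp}, $\ZC_P(R)=\NS_P(R)=\k[y_j\mid j\in\mathfrak{s}_{+\infty}]$ and $\ZC_P(\mathcal{T})=\k[y_j^{\pm1}\mid j\in\mathfrak{s}_{+\infty}]$. In particular each $y_j$ with $s(j)=+\infty$ is Poisson-central in $\mathcal{T}$, so $q_{ij}=0$ whenever $i$ or $j$ lies in $\mathfrak{s}_{+\infty}$. Hence the Poisson affine space $\mathcal{A}$ splits as a Poisson tensor product
\[
\k[y_i\mid i\in\mathfrak{s}_{<+\infty}]\otimes\k[y_j\mid j\in\mathfrak{s}_{+\infty}],
\]
with the second factor carrying the zero bracket and commuting with the first.

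For \eqref{P:ZCs:1}, I apply \eqref{T20} with $I=\mathfrak{s}_{<+\infty}$ to get $\widehat R=RE^{-1}=\mathcal{A}E^{-1}$. Inverting the $y_i$ with $i\in\mathfrak{s}_{<+\infty}$ in the splitting above gives $\mathcal{A}E^{-1}=\widehat{\mathcal{T}}[y_j\mid j\in\mathfrak{s}_{+\infty}]\simeq\widehat{\mathcal{T}}\otimes\ZC_P(R)$ as Poisson algebras. Part \eqref{P:ZCs:2} is identical, inverting all the $y_j$, which gives $\mathcal{T}\simeq\widehat{\mathcal{T}}\otimes\ZC_P(\mathcal{T})$.

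For \eqref{P:ZCs:3}, recall that the Poisson center of a Poisson torus is spanned by its Poisson-central Laurent monomials. Let $w$ be a Poisson-central monomial of $\widehat{\mathcal{T}}$. It Poisson-commutes with the variables of $\widehat{\mathcal{T}}$ and also with the $y_j$, $j\in\mathfrak{s}_{+\infty}$, since these are central. So $w\in\ZC_P(\mathcal{T})=\k[y_j^{\pm1}\mid j\in\mathfrak{s}_{+\infty}]$. But $w$ only involves the variables $y_i$ with $i\in\mathfrak{s}_{<+\infty}$, and Laurent monomials in distinct variables are linearly independent, so $w\in\k$.

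For \eqref{P:ZCs:4}, I first apply Theorem~\ref{T:generic}(1) with $A=\widehat{\mathcal{T}}$, which is finitely generated and has trivial center by \eqref{P:ZCs:3}. This gives $\ZC_P(\widehat R)=\k[y_j\mid j\in\mathfrak{s}_{+\infty}]=\ZC_P(R)$. Now let $\mathcal{A}\subseteq B\subseteq\widehat R$ and $z\in\ZC_P(B)$. Then $z$ Poisson-commutes with every $y_i$, and $\{z,y_i^{-1}\}=-y_i^{-2}\{z,y_i\}=0$, so $z$ is central in $\widehat R=\mathcal{A}E^{-1}$. Thus $\ZC_P(B)\subseteq\ZC_P(\widehat R)=\ZC_P(R)$. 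Conversely, $\ZC_P(R)\subseteq\mathcal{A}\subseteq B$ and its elements are central in $\widehat R\supseteq B$, so they are central in $B$.

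Finally, for the parity of $N-n$, the uniparameter hypothesis gives $\widehat Q\in M_{N-n}(\Q)$. A Laurent monomial $y^{a}$ in $\widehat{\mathcal{T}}$, with $a\in\Z^{N-n}$, is Poisson-central exactly when $\widehat Q a=0$. By \eqref{P:ZCs:3}, $\ker\widehat Q\cap\Z^{N-n}=0$. Since $\widehat Q$ is rational, a nonzero rational kernel vector could be cleared of denominators to give a nonzero integer one, so $\widehat Q$ is invertible. A nonsingular skew-symmetric matrix has even size, because $\det\widehat Q=\det\widehat Q^{T}=(-1)^{N-n}\det\widehat Q$. Hence $N-n$ is even. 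The only delicate step is this passage from the integer kernel to nonsingularity, which is exactly where rationality of the $q_{ij}$ is used; the remaining steps are bookkeeping with the tensor splitting.
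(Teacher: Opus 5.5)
Your proposal is correct and follows essentially the same route as the paper. Both use the tensor splitting $\widehat R=\widehat{\mathcal{T}}[y_j\mid j\in\mathfrak{s}_{+\infty}]$ (with the $y_j$ Poisson-central) for (1)--(2), deduce $\ZC_P(\widehat{\mathcal{T}})=\k$ by comparison with $\ZC_P(\mathcal{T})=\k[y_j^{\pm1}\mid j\in\mathfrak{s}_{+\infty}]$, use the sandwich $\ZC_P(\mathcal{A})\subseteq\ZC_P(B)\subseteq\ZC_P(\widehat R)$ for (4), and use the rational skew-symmetric determinant argument for the parity of $N-n$; you simply spell out steps the paper leaves terse, such as why a central element of $\widehat{\mathcal{T}}$ must be a scalar and the passage from a trivial integer kernel to nonsingularity of $\widehat Q$.
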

\begin{proof}
For~(\ref{P:ZCs:1}) above, recall that we have observed at the end of Subsection~\ref{S:rec-QNA-localizations:SSplqas} that $\widehat{R}=RE^{-1}=\mathcal{A}E^{-1}=\mathcal{\widehat{T}}[y_i\mid i\in \mathfrak{s}_{+\infty}]\simeq \mathcal{\widehat{T}}\otimes \NS_P(R)=\mathcal{\widehat{T}}\otimes\ZC_P(R)$. Now~(\ref{P:ZCs:2}) follows from~(\ref{P:ZCs:1}), as 
\begin{equation*}
\mathcal{T}= \mathcal{A}E^{-1}E_{+\infty}^{-1}\simeq \mathcal{\widehat{T}}\otimes \ZC_P(R)E_{+\infty}^{-1}=
\widehat{\mathcal{T}} \otimes\ZC_P(\mathcal T).
\end{equation*}
To show~(\ref{P:ZCs:3}) note that, by~(\ref{P:ZCs:2}), 
\begin{equation*}
\ZC_P(\mathcal{T})\simeq\ZC_P(\widehat{\mathcal{T}})\otimes \ZC_P(\mathcal T).
\end{equation*}
So indeed it must be that $\ZC_P(\widehat{\mathcal{T}})=\k$. 

By the uniparameter hypothesis, $\widehat{\mathcal{T}}$ is a Poisson torus associated with a skew-symmetric matrix $\widehat Q\in M_{N-n}(\Q)$. If $N-n$ were odd, then $\det \widehat Q=0$, which would imply the existence of a nonzero vector $\underline\alpha\in\Z^{N-n}$ in the kernel of $\widehat Q$. This would lead to the existence of a nontrivial monomial in the $Y_{<+\infty}$ being central, which would contradict $\ZC_P(\widehat{\mathcal{T}})=\k$. Thence,  $N-n$ must be even.

It remains to prove~(\ref{P:ZCs:4}). By~(\ref{P:ZCs:1}) and~(\ref{P:ZCs:3}), $\ZC_P(\widehat R)=\ZC_P(R)=\ZC_P(\mathcal{A})$. 
If $\mathcal{A}\subseteq B\subseteq \widehat R$ is a chain Poisson subalgebras, then we deduce from $\widehat{R}=\mathcal{A} E^{-1}$ that $\ZC_P(\mathcal{A})\subseteq \ZC_P(B)\subseteq \ZC_P(\widehat R)$, yielding $\ZC_P(B)=\ZC_P(R)$.
\end{proof}


\section{The first Poisson cohomology group of a PNA}
\label{section: derivation QNA}

This is the main section of the paper and it focuses on investigating the first Poisson cohomology group of a uniparameter PNA $R$ satisfying $\NS_P(R)=\ZC_P(R)$.
Interesting examples of such PNAs are the semiclassical limits $\widehat{U^+(\mathfrak{g})}$ of $U_q^+(\mathfrak{g})$ with $\mathfrak{g}$ a finite-dimensional complex simple Lie algebra of type 
$A_1$, $B_n~(n\geq 2)$, $C_n~(n\geq 3)$, $D_n~(n\geq 4 ~\text{even})$, $ G_2$, $F_4$, $E_7$ and $E_8$ (see Example~\ref{E:B-S}). The main results still hold for the remaining finite types, although that requires a more technical set-up like that in~\cite[Section 4]{LLO25} (see Remark~\ref{R:rem:on:hyp}). 

We will show that each Poisson derivation of $R$ decomposes (uniquely) as a sum of an inner Poisson (Hamiltonian) derivation and a homogeneous Poisson derivation (see Subsection~\ref{SS:homogeneous}).

\subsection{The inner component of a Poisson derivation of $R$}

To study the space $\der(R)$ of Poisson $\k$-derivations of $R$, notice that we can uniquely extend any Poisson derivation $D$ of $R$ to a Poisson derivation of $\widehat{R}=RE^{-1}$, via localization. Thus, we identify $\der(R)$ with $\{D\in \der(\widehat R)\mid D(R)\subseteq R\}$.

From Proposition~\ref{P:ZCs} we have that $\widehat{\mathcal{T}}:=\k[y_i^{\pm 1}\mid i\in \mathfrak{s}_{<+\infty}]$ is a simple Poisson torus and $\widehat{R}=\widehat{\mathcal{T}}[y_j\mid j\in \mathfrak{s}_{+\infty}]$, a polynomial extension of $\widehat{\mathcal{T}}$ by $n$ Poisson-central variables. Thus, Corollary~\ref{prop: partially-localized-q-affine-spaces} shows that, as a Poisson derivation of $\widehat{R}$, we can decompose $D$ (uniquely) as
\begin{equation}\label{E:D-as-ad-plus-central}
D=\ad_x+\theta, 
\end{equation}
for some $x\in \widehat{R}$ so that, for all $i\in \mathfrak{s}_{<+\infty}$, $\theta(y_i)= \omega_i y_i$, for some $\omega_i \in \ZC_P(\widehat{R})=\ZC_P(R)$. If $i\in \mathfrak{s}_{+\infty}$ then, by our hypothesis that $\NS_P(R)=\ZC_P(R)$, it follows that $y_i\in\ZC_P(R)=\ZC_P(\widehat R)$. Evaluating~\eqref{E:D-as-ad-plus-central} at $y_i$ thus gives $\theta(y_i)=D(y_i)\in\ZC_P(R)$, as $D\in\der (R)$ and Poisson derivations take Poisson-central elements to Poisson-central elements.

Our immediate goal is to show that $x\in R$ and, for that purpose, we need to introduce intermediate subalgebras between $R$ and $\widehat R$.

Recall that $E=E_{<+\infty}$. For each $k\in \mathfrak{s}_{<+\infty}$, let 
$F_k:=E_{\mathfrak{s}_{<+\infty}\setminus\{k\}}$ be the multiplicative set in $R$ generated by $Y_{<+\infty}\setminus\{y_k\}$ and 
\begin{equation*}
B_k:=RF_k^{-1} 
\end{equation*}
be the corresponding localization.

Note that $y_k$ generates a multiplicative system in $B_k$. We have the following chain of embeddings: 
\begin{equation}
\label{T23}
 R\subseteq B_k\subseteq \widehat{R}=B_k[y_k^{-1}]\subseteq \mathcal{T}.
\end{equation}
We know already that $\ZC_P(R)=\ZC_P(B_k)=\ZC_P(\widehat{R})$, by Proposition~\ref{P:ZCs}, so we have complete control over the centers of all Poisson algebras appearing in~\eqref{T23}.

For each $k\in \mathfrak{s}_{<+\infty}$, let $\mathcal{Q}_{k}=\k[y_i^{\pm 1}\mid  i\in \mathfrak{s}_{<+\infty}\setminus\{k\}]$. So $\mathcal{Q}_{k}$ is a Poisson torus associated to a  skew-symmetric matrix ${\widehat Q}_k$  obtained from $Q$ by deleting its rows and columns indexed by $\mathfrak{s}_{+\infty} \cup\{k\}$. Moreover, $\mathcal{Q}_{k}\subseteq B_k$ and 
\begin{equation}\label{E:Tqh:over:Qk}
\widehat{\mathcal{T}}=\bigoplus_{j\in\mathbb{Z}}\mathcal{Q}_{k} y_k^j. 
\end{equation}

The rank of $\mathcal{Q}_{k}$ is $N-n-1$, which is odd, by Proposition~\ref{P:ZCs}. Thus, as argued before, since the center of a uniparameter odd rank Poisson torus is nontrivial and Poisson-central elements in a Poisson torus are sums of Poisson-central (Laurent) monomials in the generators of the Poisson torus, we have the following result.
 
\begin{lemma}
\label{a17}
For each $k\in \mathfrak{s}_{<+\infty}$, there exists a nontrivial monomial $\prod_{i\in \mathfrak{s}_{<+\infty}\setminus\{k\}} y_i^{m_i}$ (where at least one of the integers $m_i$ is nonzero) in the Poisson center of the  Poisson torus 
$\mathcal{Q}_{k}$.
\end{lemma}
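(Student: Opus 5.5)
The plan is to reduce the statement to linear algebra over $\Q$, using the uniparameter hypothesis. The starting point is that the rank $N-n-1$ of $\mathcal{Q}_k$ is odd, since $N-n$ is even by Proposition~\ref{P:ZCs}.

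First, recall how brackets act on monomials. For $\underline{m}=(m_i)_{i\in \mathfrak{s}_{<+\infty}\setminus\{k\}}\in\Z^{N-n-1}$, set $y^{\underline m}:=\prod_{i} y_i^{m_i}\in\mathcal{Q}_k$. For each $j\in \mathfrak{s}_{<+\infty}\setminus\{k\}$, the Leibniz rule together with $\{y_j,y_i\}=q_{ij}y_iy_j$ gives
\begin{equation*}
\{y^{\underline m}, y_j\} = \Bigl(\sum_{i} m_i q_{ji}\Bigr) y^{\underline m} y_j
\end{equation*}
(up to the sign convention for $Q$). Hence $y^{\underline m}$ is Poisson-central in $\mathcal{Q}_k$ if and only if it brackets to zero with every generator $y_j$. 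By the derivation property of $\{y^{\underline m},-\}$, it then also brackets to zero with every $y_j^{-1}$. So centrality holds exactly when $\widehat Q_k\,\underline m=0$, where $\widehat Q_k$ is the submatrix of $Q$ described just before the lemma.

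Second, apply the determinant argument. The matrix $\widehat Q_k$ is skew-symmetric of odd size $N-n-1$, so
\begin{equation*}
\det \widehat Q_k=\det(\widehat Q_k^{T})=\det(-\widehat Q_k)=(-1)^{N-n-1}\det\widehat Q_k=-\det\widehat Q_k .
\end{equation*}
Since $\operatorname{char}\k=0$, this gives $\det\widehat Q_k=0$. By the uniparameter hypothesis, $\widehat Q_k$ has rational entries, as noted in Subsection~\ref{S:rec-QNA-localizations:SSplqas}. Its kernel is therefore a nonzero $\Q$-subspace of $\Q^{N-n-1}$. Choose a nonzero rational kernel vector and clear denominators to obtain a nonzero integer vector $\underline m$. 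The corresponding monomial is then nontrivial, with some $m_i\neq 0$, and lies in $\ZC_P(\mathcal{Q}_k)$.

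The only point requiring care is the rationality of the kernel vector; this is exactly where the uniparameter hypothesis enters. Over an arbitrary $\k$, the kernel of $\widehat Q_k$ might contain no nonzero integer points. Because the entries are rational, however, the rank over $\k$ equals the rank over $\Q$, so a rational, and hence integer, kernel vector exists. One should also confirm that $N-n-1\geq 1$, so that the torus is nonempty. This holds because $k\in\mathfrak{s}_{<+\infty}$ and $N-n$ is even, which forces $N-n\geq 2$.
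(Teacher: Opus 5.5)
Your proof is correct and takes the same route as the paper. Both use the oddness of $N-n-1$ from Proposition~\ref{P:ZCs}, the vanishing of the determinant of the odd-size rational skew-symmetric matrix $\widehat Q_k$, and clearing denominators in a nonzero rational kernel vector to get a central Laurent monomial; you spell out details (the bracket formula on monomials, the role of rationality, and $N-n-1\geq 1$) that the paper leaves to ``as argued before.''
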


Since $B_k$ is a localization of $R$ contained in $\widehat{R}$, we can further think of $D$ as a Poisson derivation of $\widehat{R}$ such that $D(R)\subseteq R$ and $D(B_k)\subseteq B_k$.

\begin{lemma}\label{L:inner:x-in-R}
Let $x\in \widehat{R}$ be as in~\eqref{E:D-as-ad-plus-central}. Then $x\in R$.
\end{lemma}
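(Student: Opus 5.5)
We show $x\in B_k$ for every $k\in\mathfrak{s}_{<+\infty}$. Then $x\in\bigcap_k RF_k^{-1}=RE_{\bigcap_k(\mathfrak{s}_{<+\infty}\setminus\{k\})}^{-1}=RE_\emptyset^{-1}=R$ by Proposition~\ref{b4}. (If $\mathfrak{s}_{<+\infty}=\emptyset$ then $\widehat R=R$ and there is nothing to prove.) So fix $k\in\mathfrak{s}_{<+\infty}$.

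Since $\widehat R=\widehat{\mathcal T}[y_j\mid j\in\mathfrak{s}_{+\infty}]$, \eqref{E:Tqh:over:Qk} gives a decomposition $\widehat R=\bigoplus_{j\in\Z} C_k\,y_k^j$ with $C_k:=\mathcal{Q}_k[y_i\mid i\in\mathfrak{s}_{+\infty}]$. Write $x=\sum_{j} x_j y_k^j$ with $x_j\in C_k$.
\begin{itemize}
\item The component $x_0$ is Poisson-central in $\widehat R$ modulo adjustments. Since $\ad_x$ only depends on $x$ modulo $\ZC_P(\widehat R)=\ZC_P(R)\subseteq R$, we may subtract central parts freely.
\item The terms with $j\ge 0$ lie in $B_k$, because $C_k\subseteq B_k$ (as $\mathcal{Q}_k\subseteq B_k$ and $y_i\in R$). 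So it suffices to show $x_j=0$ for all $j<0$.
\end{itemize}

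To kill the negative part, use Lemma~\ref{a17}. Let $u=\prod_{i\neq k} y_i^{m_i}$ be a nontrivial Poisson-central monomial of $\mathcal{Q}_k$. Multiplying by a suitable product of the $y_i$ ($i\in\mathfrak{s}_{<+\infty}\setminus\{k\}$), which are invertible in $B_k$, write $u=ab^{-1}$ with $a,b\in F_k$ monomials. Then $u\in B_k$, so $D(u)\in B_k$ and $\theta(u)\in B_k$. Indeed, $\theta(u)=(\sum_i m_i\omega_i)u$ with $\omega_i\in\ZC_P(R)$. Hence $\ad_x(u)=\{x,u\}\in B_k$.

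Now compute $\{x_jy_k^j,u\}$. Since $u$ commutes (Poisson) with $\mathcal{Q}_k$ and with the central $y_i$, it commutes with $C_k$. By the log-canonical relations, $\{y_k,u\}=\beta y_ku$ with $\beta=\sum_i m_iq_{ik}$ (up to sign). Thus
\[
\{x,u\}=\sum_j j\beta\, x_j y_k^j u .
\]
If $\beta\neq0$, the $y_k$-graded components give $j\beta x_ju\in B_k$ for each $j$. For $j<0$, the element $x_ju y_k^j$ then lies in $B_k\cap C_ky_k^{j}$, which forces $x_j=0$. This holds because $B_k\cap\widehat R$ has only nonnegative $y_k$-degree components in this grading. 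That fact needs justification: by Proposition~\ref{b4}-type primeness of $y_k$, an element of $\mathcal{A}F_k^{-1}$ written in the monomial basis has no negative powers of $y_k$. Here one uses $B_k\subseteq \mathcal{A}E_{\mathfrak{s}_{<+\infty}\cup\mathfrak{s}_{+\infty}\setminus\{k\}}^{-1}$, the localization of $\mathcal{A}$ avoiding $y_k$, together with \eqref{T20}-style identifications and Proposition~\ref{b4}.

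\textbf{Main obstacle.} The crux is guaranteeing $\beta\neq0$. If $\beta=0$, then $u$ would be Poisson-central in all of $\widehat{\mathcal T}$, since it already commutes with $\mathcal{Q}_k$. Being a nontrivial monomial, this contradicts $\ZC_P(\widehat{\mathcal T})=\k$ from Proposition~\ref{P:ZCs}(\ref{P:ZCs:3}). So $\beta\neq0$ automatically, and the real technical care lies in the graded-intersection step: checking that the decomposition of $B_k$ along powers of $y_k$ behaves well, i.e.\ that $B_k\cap C_ky_k^{j}=0$ for $j<0$. I would verify this through the prime-element argument of Proposition~\ref{b4} applied inside $\mathcal{T}$.
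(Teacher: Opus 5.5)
Your overall architecture matches the paper's proof. You decompose $x=\sum_j x_jy_k^j$ over $C_k$, take a Poisson-central monomial $u$ of $\mathcal{Q}_k$ with $\{y_k,u\}=\beta y_ku$ (where $\beta\neq0$ because $\ZC_P(\widehat{\mathcal T})=\k$), show $\{x,u\}\in B_k$, and finish with Proposition~\ref{b4}. The decisive step, however, is wrong.

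\textbf{The target $x_j=0$ for $j<0$ is false.} Your argument for it rests on $B_k\cap C_ky_k^{j}=0$ for $j<0$, and on the inclusion $B_k\subseteq\mathcal{A}E^{-1}_{[1,N]\setminus\{k\}}$. Both fail. The identification $RE_I^{-1}=\mathcal{A}E_I^{-1}$ in \eqref{T20} requires $\mathfrak{s}_{<+\infty}\subseteq I$, which does not hold for $I=[1,N]\setminus\{k\}$.

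Concretely, for $k\in\mathfrak{s}_{<+\infty}$ we have $x_{s(k)}=y_k^{-1}(y_{s(k)}+c_{s(k)})\in R\subseteq B_k$. Its component in $C_ky_k^{-1}$ is nonzero: it contains the monomial $y_{s(k)}y_k^{-1}$, since $c_{s(k)}\in R_{s(k)-1}\subseteq\k[y_1^{\pm1},\dots,y_{s(k)-1}^{\pm1}]$. Now take $D=\ad_{x_{s(k)}}$. By uniqueness of the decomposition, $x$ equals $x_{s(k)}$ up to an element of $\ZC_P(R)=\k[y_j\mid j\in\mathfrak{s}_{+\infty}]\subseteq C_k$, so $x_{-1}\neq0$.

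\textbf{The graded-components step is also unproved.} You assert that each graded component $j\beta x_jy_k^ju$ lies in $B_k$. But $B_k$ is not obviously a graded subspace of $\widehat R=\bigoplus_j C_ky_k^j$, and supplying exactly this is the real content of the lemma.

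\textbf{How to fix it.} You only need $x_jy_k^j\in B_k$ for $j<0$, not $x_j=0$; this is what the paper proves.
\begin{enumerate}
\item Since $u^{\pm1}\in B_k$ and $B_k$ is a Poisson subalgebra, the map $\phi(a)=u^{-1}\{a,u\}$ preserves $B_k$. It acts on $C_ky_k^j$ as multiplication by $j\beta$.
\item Set $x_-=\sum_{j<0}x_jy_k^j$. Then $\{x_-,u\}\in B_k$, because $\{x,u\}\in B_k$ and $\{x_+,u\}\in B_k$.
\item Iterating $\phi$ gives $\sum_{j=-m}^{-1}(j\beta)^n x_jy_k^j\in B_k$ for $n=1,\dots,m$.
\item The coefficient matrix is of Vandermonde type with distinct nonzero nodes $j\beta$ (characteristic $0$), hence invertible. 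So each $x_jy_k^j\in B_k$.
\end{enumerate}
Thus $x\in B_k$ for every $k\in\mathfrak{s}_{<+\infty}$, and Proposition~\ref{b4} gives $x\in R$ as you planned.
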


\begin{proof}
For each $k\in \mathfrak{s}_{<+\infty}$,
let $C_k=\mathcal{Q}_{k}[y_j\mid j\in \mathfrak{s}_{+\infty}]$. Then $C_k$ is a Poisson subalgebra of $B_k$ and, by \eqref{E:Tqh:over:Qk},
\begin{equation}\label{E:Rhat:over:Ck}
\widehat{R}=\bigoplus_{j\in\mathbb{Z}}C_k y_k^j. 
\end{equation}
As a result, $x\in \widehat{R}$ can be written uniquely as
$$x=\sum_{j\in \Z}a_{(k,j)}y_{k}^j,$$ 
where $a_{(k,j)}\in C_{k}.$
Decompose $x=x_+ + x_-,$ where
$$x_-=\sum_{j<0}a_{(k,j)}y_{k}^j \quad \text{and} \quad 
x_+=\sum_{j\geq 0}a_{(k,j)}y_{k}^j.$$
Clearly, $x_+\in B_k$.   We now proceed to show that $x_-\in B_k$ by using a strategy already used in \cite[Lemma 5.7]{lo} and \cite[Lemma 5.2]{LLO25}. Since $C_k$ is generated by the Poisson torus $\mathcal{Q}_{k}$  and the central variables $y_i$, with $i\in \mathfrak{s}_{+\infty}$, we deduce from Lemma~\ref{a17} that there exists a nontrivial monomial in the generators of $\mathcal{Q}_k$, denoted by $u_k$, that is Poisson-central in $C_{k}$. Note that $u_k$ does not belong to $\ZC_P(\widehat{R})$ because $\mathcal{Q}_{k}\cap\ZC_P(\widehat{R})\subseteq \widehat{\mathcal{T}}\cap \ZC_P(R)=\k$. Since the monomial $u_k$ is Poisson-central in $C_k$ and not in $\widehat{R}$, then~\eqref{E:Rhat:over:Ck} forces $\{y_k, u_k\}\neq 0$ and so $\{y_{k}, u_{k}\}=\xi u_{k}y_{k}$, for some $\xi:=\xi_{k}\in\k\setminus\{0\}$.

Since  $\theta(y_{j})=\omega_{j}y_{j}$
for each $j\in \mathfrak{s}_{<+\infty}$, with $\omega_{j}\in \ZC_P(R)=\ZC_P(B_k)$,  we have that $\theta(u_{k})=\eta_{k} u_{k}$, for some $\eta_{k}\in \ZC_P(B_k)$. 
Note that $u_{k}^{\pm 1}\in \mathcal{Q}_k\subseteq  B_{k}$.  Since $D$ restricts to a Poisson derivation of $B_k$, we have that
\[D(u_{k})=\ad_{x}(u_k)+\theta(u_{k})
=\ad_{x_-}(u_{k})+\ad_{x_+}(u_{k})+
\eta_{k} u_{k}\in B_{k}.\] 
Observe that $\ad_{x_+}(u_{k})+
\eta_{k} u_{k}\in B_{k}.$ Hence,  $\ad_{x_-}(u_{k})\in B_{k}.$
It follows that
$$\ad_{x_-}(u_{k})=\sum_{j=-1}^{-m} a_{(k,j)} j\xi y_{k}^ju_{k}\in B_{k},$$ 
for some $m\in \Z_{>0}.$ Let $n\in \Z_{>0}$ and set 
$$\ell^{(n)}=\{\{\ldots \{x_-, \underbrace{u_k\}, u_k\}, \ldots, u_k\}}_{\text{$n$ times}}\in B_{k}.$$ One can easily prove by induction on $n$ that 
$$\ell^{(n)}=\sum_{j=-1}^{-m} a_{(k,j)} (j\xi)^n y_{k}^ju_{k}^n.$$
Set $\chi_n:=\ell^{(n)}u_{k}^{-n}.$ Then,
$$\chi_n =\sum_{j=-1}^{-m} a_{(k,j)} (j\xi)^n y_{k}^j \in B_k,$$ for each $n \in [1, m].$

We have the following matrix equation:

\[
\begin{bmatrix}
-\xi&-2\xi&\cdots &-m\xi \\
(-\xi)^2&(-2\xi)^{2}&\cdots &(-m\xi)^{2}\\
(-\xi)^3&(-2\xi)^{3}&\cdots &(-m\xi)^{3}\\
\vdots&\vdots&\ddots&\vdots\\
(-\xi)^m&(-2\xi)^{m}&\cdots &(-m\xi)^{m}\\
\end{bmatrix}
\begin{bmatrix}
a_{(k,-1)}y_{k}^{-1}\\
a_{(k,-2)}y_{k}^{-2}\\
a_{(k,-3)}y_{k}^{-3}\\
\vdots\\
a_{(k,-m)}y_{k}^{-m}\\
\end{bmatrix}=
\begin{bmatrix}
\chi_1\\
\chi_2\\
\chi_3\\
\vdots\\
\chi_m\\
\end{bmatrix}.
\]
We already know that each $\chi_i$ is an element of $B_k$. 
We now show that, for each $-m\leq j\leq -1$, the elements $a_{(k,j)}y_{k}^j$ also belong to $B_k$.
It is sufficient to do this by showing that the coefficient matrix 
\[
U:=\begin{bmatrix}
-\xi&-2\xi&\cdots &-m\xi \\
(-\xi)^2&(-2\xi)^{2}&\cdots &(-m\xi)^{2}\\
(-\xi)^3&(-2\xi)^{3}&\cdots &(-m\xi)^{3}\\
\vdots&\vdots&\ddots&\vdots\\
(-\xi)^m&(-2\xi)^{m}&\cdots &(-m\xi)^{m}\\
\end{bmatrix}
\]
is invertible. One can observe that $U$ is essentially a Vandermonde matrix with distinct and nonzero generators (as $\k$ has characteristic $0$), hence it is invertible.  
Therefore, each $a_{(k,j)}y_{k}^j$ is a linear combination of the $\chi_i\in B_k.$ We can therefore conclude that, for each $k\in {\mathfrak s}_{<+\infty}$, the element $a_{(k,j)}y_{k}^j$ is in $B_k$, for all $j<0$. 

Consequently, 
 $x_{-}=\sum_{j=-1}^{-m}a_{(k,j)}y_{k}^j\in B_k,$ and so $x=x_++x_-\in B_k$ for each $k\in  {\mathfrak s}_{<+\infty}$.  
Finally,  
$$x\in \bigcap_{k\in  {\mathfrak s}_{<+\infty} }B_k= \bigcap_{k\in  {\mathfrak s}_{<+\infty} }RE_{{\mathfrak s}_{<+\infty}\setminus\{k\} }^{-1}= R,$$ 
the last equality following from Proposition~\ref{b4}. 
\end{proof} 
\begin{cor}
\label{cor-inner} 
$\ad_x$ and $\theta=D-\ad_x$ are Poisson derivations of $R$.
\end{cor}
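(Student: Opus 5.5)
The corollary follows almost immediately from Lemma~\ref{L:inner:x-in-R}. By that lemma, the element $x\in\widehat R$ appearing in the decomposition~\eqref{E:D-as-ad-plus-central} actually lies in $R$. I will first check that $\ad_x$, computed in $\widehat R$, restricts to the Hamiltonian derivation of $R$ attached to $x$. Indeed, $R$ is a Poisson subalgebra of $\widehat R$, since the Poisson bracket of $\widehat R=RE^{-1}$ is the unique extension of that of $R$. Hence for all $r\in R$ we have $\ad_x(r)=\{x,r\}\in R$, because both $x$ and $r$ lie in $R$. So $\ad_x$ restricts to an element of $\innder(R)\subseteq\der(R)$, and this restriction is exactly the Hamiltonian derivation of $R$ defined by $x$.

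Next, I will handle $\theta=D-\ad_x$. Recall that we identified $\der(R)$ with $\{D'\in\der(\widehat R)\mid D'(R)\subseteq R\}$. Here $D(R)\subseteq R$ by assumption and $\ad_x(R)\subseteq R$ by the previous step. Hence $\theta(R)\subseteq R$. Moreover $\theta$ is a difference of two Poisson derivations of $\widehat R$, so it is itself a Poisson derivation of $\widehat R$. Its restriction to $R$ is therefore a $\k$-linear map $R\to R$ satisfying the Leibniz rule for both the product and the bracket, that is, an element of $\der(R)$.

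There is no real obstacle remaining: all the difficulty was absorbed by Lemma~\ref{L:inner:x-in-R}. The only point to state explicitly is that restriction from $\widehat R$ to $R$ preserves both the associative and the Lie derivation properties. This holds because $R\hookrightarrow\widehat R$ is an injective homomorphism of Poisson algebras, as $R$ is a domain and $E$ consists of nonzero elements.
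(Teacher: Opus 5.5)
Your proposal is correct and follows essentially the same route as the paper, which states this corollary without proof as immediate from Lemma~\ref{L:inner:x-in-R}. Once $x\in R$, the derivation $\ad_x$ preserves $R$, hence so does $\theta=D-\ad_x$, and both restrict to Poisson derivations of $R$ via the identification of $\der(R)$ with the Poisson derivations of $\widehat R$ that preserve $R$.
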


\subsection{Homogeneous Poisson derivations}\label{SS:homogeneous}

Our next task is to study how the Poisson derivation $\theta$ appearing in the decomposition~\eqref{E:D-as-ad-plus-central} acts on the generators $x_1, \ldots, x_N$ of $R$. We refer to $\theta$ as a {\em homogeneous Poisson derivation}, a terminology that will be justified in the sequel. 

Recall that the action of $\theta$ on the homogeneous elements $y_i$, with $i\in [1,N]$, is of the form 
\[
\theta(y_k)=\begin{cases}
\omega_k y_k & \text{if $y_k\notin\ZC_P(R)$,}\\
\omega_k & \text{if $y_k\in\ZC_P(R)$,}
\end{cases}
\]
for some $\omega_1, \ldots, \omega_N\in \ZC_P(R)$.

From the definition of a the PNA, $R=R_{k-1}[x_{k}; \sigma_{k}, \delta_{k}]_P\cdots [x_N; \sigma_N, \delta_N]_P$ is equipped with the action of the maximal torus $\mathcal{H}=(\k^*)^n$ by Poisson automorphisms, where $n$ is the rank of $R$ and the character group $X(\mathcal{H})$ is isomorphic to $\chi:=\Z^n$
(see \cite[Proof of Lemma 6.7]{goodearl2023cluster}). For each homogeneous $x\in R$, there exists a unique {\em weight} $\underline\alpha =(\alpha_1, \ldots, \alpha_n)\in \chi$ such that
$$\underline h\cdot x=h_1^{\alpha_1}\ldots h_n^{\alpha_n}x,$$
for all 
$\underline h=(h_1, \ldots, h_n)\in \mathcal{H}$.

We are now ready to describe the action of the homogeneous Poisson derivation $\theta$ on the generators $x_i$ (and on all homogeneous elements). We omit the proof of the next result as it is identical, up to the corresponding adaptations, to that of~\cite[Proposition 5.6]{LLO25}, taking into account the comments that precede it there. For the interested reader, we do mention that the proof uses Proposition~\ref{b5}, whose proof was given here as it is simpler in the Poisson setting than in the case of quantum nilpotent algebras which was considered in~\cite{LLO25}.

\begin{pro}
\label{centralderivation1}
Let $\theta$ be a Poisson $\k$-derivation of $R$. Assume that: 
\begin{enumerate}
\item None of the generators $x_i$, with $i\in [1,N]$, is Poisson-central in $R$. 
\item There are $\omega_1, \ldots, \omega_N\in \ZC_P(R)$ such that $
\theta(y_k)=\begin{cases}
\omega_k y_k & \text{if $y_k\notin\ZC_P(R)$;}\\
\omega_k & \text{if $y_k\in\ZC_P(R)$.}
\end{cases}
$
\end{enumerate}
Then, there exists an abelian group homomorphism $\eta:\chi\rightarrow \ZC_P(R)$
such that, for every homogeneous element $a\in R$, 
\[\theta(a)=\eta(\wt(a))a.\]
\end{pro}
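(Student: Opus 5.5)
The plan is to prove, by induction on $k\in[1,N]$, that $\theta(x_k)=\omega'_k x_k$ for some $\omega'_k\in\ZC_P(R)$, and then to show that $\omega'_k$ depends only on $\wt(x_k)$ in an additive way.

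\textbf{Step 1: the generators $x_k$.} If $p(k)=-\infty$, then $x_k=y_k$. By hypothesis~(1), $y_k\notin\ZC_P(R)$, so $\theta(x_k)=\omega_k x_k$. (If $y_k$ were central we would have a problem; hypothesis~(1) rules this out.)

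If $p(k)\neq-\infty$, we have $y_k=y_{p(k)}x_k-c_k$ with $c_k\in R_{k-1}$ homogeneous. Assume inductively that $\theta(a)=\eta_{k-1}(\wt a)a$ for homogeneous $a\in R_{k-1}$. Note that $s(p(k))=k<+\infty$, so $y_{p(k)}$ is not central and $\theta(y_{p(k)})=\omega_{p(k)}y_{p(k)}$. Applying $\theta$ to $y_{p(k)}x_k=y_k+c_k$ gives
\[
y_{p(k)}\bigl(\theta(x_k)-\omega_{p(k)}x_k\bigr)=\theta(y_k)+\theta(c_k).
\]
Write $\omega:=\omega_{p(k)}+\omega''$, where $\omega''$ is the candidate scalar for $x_k$, and set $r:=\theta(x_k)-\omega x_k$. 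Since $y_k$ and $c_k$ have the same weight (by homogeneity), the right-hand side can be rearranged. Using $\theta(c_k)=\eta_{k-1}(\wt c_k)c_k$, we obtain
\[
y_{p(k)}\, r=(\text{central})\cdot y_k+(\text{central})\cdot c_k,
\]
where $y_k$ is replaced by $\omega_k$ if $y_k$ is central. The key point is to choose $\omega''$ so that the coefficients match, and then to show that the leftover term vanishes.

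\textbf{Step 2: the leftover term must vanish.} This is where Proposition~\ref{b5} enters. A leftover of the form $z\,c_k$, with $z$ central, lies in $y_{p(k)}R$. By Remark~\ref{T25}, $c_k\notin y_{p(k)}R_{k-1}$, and a degree argument in $x_k$ extends this to $c_k\notin y_{p(k)}R$. Since $y_{p(k)}$ is prime, we get $z\in y_{p(k)}R$. But $z\in\ZC_P(R)\subseteq\NS_P(R)$, so Proposition~\ref{b5} forces $z=0$.

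This yields a relation of the form $\theta(x_k)=\omega'_k x_k$ with $\omega'_k$ central. In the case where $y_k$ is central, the extra summand $\omega_k$ is handled the same way, using $y_k\notin y_{p(k)}R$ (Lemma~\ref{dd}). I expect this bookkeeping to be the main obstacle, since it requires comparing the weights of $y_k$, $y_{p(k)}x_k$ and $c_k$ to decide which scalars must agree.

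\textbf{Step 3: defining $\eta$.} Once $\theta(x_i)=\omega'_i x_i$ for all $i$, the Leibniz rule gives $\theta(m)=(\sum_i e_i\omega'_i)\,m$ for every monomial $m=\prod_i x_i^{e_i}$. The same matching argument applied to every homogeneous $c_k$ shows that any two monomials of equal weight receive the same scalar. Indeed, $y_{p(k)}x_k$ and $c_k$ have equal weight, and Step 2 shows their scalars coincide; by induction, each homogeneous component then gets a scalar depending only on its weight.

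The weights $\wt(x_i)$ generate a subgroup of $\chi$, and the rule $\wt(x_i)\mapsto\omega'_i$ is consistent on relations among weights. This consistency can be checked via the elements $y_j$ with $j\in\mathfrak{s}_{+\infty}$, whose weights span (after the reduction to the maximal torus $\mathcal{H}$) a finite-index subgroup. Since $\ZC_P(R)$ is torsion-free in characteristic $0$, the homomorphism extends to a map $\eta:\chi\to\ZC_P(R)$, after tensoring with $\Q$ if needed. Finally, $\theta(a)=\eta(\wt a)a$ for homogeneous $a$ follows by linearity over the monomials of weight $\wt a$.
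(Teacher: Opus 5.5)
\textbf{Where the proposal is right.} Your Steps 1--2 use the mechanism the paper points to (the argument of \cite{LLO25} together with Proposition~\ref{b5}). From $y_{p(k)}x_k=y_k+c_k$, the primeness of $y_{p(k)}$, $c_k\notin y_{p(k)}R$, and Proposition~\ref{b5} (valid since $s(p(k))=k$), one gets, when $y_k\notin\ZC_P(R)$, that $\omega_k=\eta_{k-1}(\wt c_k)$ and $\theta(x_k)=(\omega_k-\omega_{p(k)})x_k$. (Your $r$ should be $\theta(x_k)-\omega''x_k$.)

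\textbf{The case $y_k\in\ZC_P(R)$.} Here Lemma~\ref{dd} is the wrong tool. With $\omega=\eta_{k-1}(\wt c_k)$ the leftover is $\omega_k-\eta_{k-1}(\wt c_k)\,y_k$. This is not of the form $z\,y_k$, since $\omega_k\in\ZC_P(R)y_k$ is part of what must be proved. It is, however, central and lies in $y_{p(k)}R$, so Proposition~\ref{b5} kills it directly. In both cases you then get $\theta(x_k)=\eta_{k-1}(\wt c_k-\wt y_{p(k)})x_k=\eta_{k-1}(\wt x_k)x_k$.

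\textbf{The genuine gap: Step 3, the construction of $\eta$.} First, it is logically misplaced. Step 1 already needs $\theta(c_k)=\eta_{k-1}(\wt c_k)c_k$; otherwise distinct monomials of $c_k$ could carry distinct scalars, and the leftover would not be a central multiple of $c_k$. So weight-consistency must be proved inside the induction, not after all $\theta(x_i)$ are known.

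For $p(k)\neq-\infty$, the formula above shows that $\eta_k=\eta_{k-1}$ works. For $p(k)=-\infty$, however, $\omega_k$ is an arbitrary element of $\ZC_P(R)$. You can extend $\eta_{k-1}$ so that $\wt(x_k)\mapsto\omega_k$ only if $\wt(x_k)\notin\Q\wt(x_1)+\cdots+\Q\wt(x_{k-1})$. Since every $\wt(x_j)$ with $p(j)\neq-\infty$ equals $\wt(c_j)-\wt(y_{p(j)})$, this amounts to the $n$ weights $\wt(x_k)$, $p(k)=-\infty$, being linearly independent. That is exactly what the choice of $\ch=(\k^*)^n$ as the maximal rank-$n$ torus supplies: there the weights of the generators span a lattice of rank $n$. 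You must invoke this explicitly.

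It cannot come out of your matching argument. The relations it produces only determine the scalars of the $x_k$ with $p(k)\neq-\infty$ from earlier ones, and put no constraint on the $\omega_k$ with $p(k)=-\infty$. Concretely, take $\{x_1,x_2\}=x_1x_2$ with $\k^*$ acting by the same character on $x_1$ and $x_2$; this is a legitimate but non-maximal PNA torus. The Poisson derivation $x_1\mapsto x_1$, $x_2\mapsto 0$ satisfies (1)--(2) and admits no $\eta$.

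The substitute you propose, that the weights of $Y_{+\infty}$ span a finite-index subgroup, is unproved and is not a general feature of PNAs. In the semiclassical limit of $U_q^+(\mathfrak{sl}_3)$, with $x_1,x_2,x_3$ of weights $\alpha_1,\alpha_1+\alpha_2,\alpha_2$, one has $Y_{+\infty}=\{y_2,y_3\}$. Here $y_2=x_2$ and $y_3=x_1x_3-c_3$ both have weight $\alpha_1+\alpha_2$.

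\textbf{A smaller point.} Extending a homomorphism from a subgroup to $\chi$ uses that $\ZC_P(R)$ is divisible (it is a $\Q$-vector space), not merely that it is torsion-free.
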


Since $x_k$ and $y_k$ are homogeneous elements of $R$, for each $k\in [1,N]$, we have the following immediate corollary.   
\begin{cor}
\label{theta(y_k)inZ(R)y_k}
Under our running assumptions,
$\theta(x_k)\in \ZC_P(R)x_k$ and $\theta(y_k)\in \ZC_P(R)y_k$, for all $k\in [1, N].$ 
\end{cor}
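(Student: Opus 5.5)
The corollary follows by specializing Proposition~\ref{centralderivation1}, so the plan is to check its hypotheses for $\theta=D-\ad_x$ and then apply its conclusion to the generators $x_k$ and the elements $y_k$.

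First, $\theta$ is a Poisson derivation of $R$. This is Corollary~\ref{cor-inner}, which rests on Lemma~\ref{L:inner:x-in-R}.

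Second, by the discussion following \eqref{E:D-as-ad-plus-central} there are $\omega_1,\dots,\omega_N\in\ZC_P(R)$ such that
\begin{itemize}
\item $\theta(y_k)=\omega_k y_k$ when $k\in\mathfrak{s}_{<+\infty}$;
\item $\theta(y_k)=D(y_k)\in\ZC_P(R)$ when $k\in\mathfrak{s}_{+\infty}$.
\end{itemize}
Here we use that $\NS_P(R)=\ZC_P(R)$, so that $y_k$ is Poisson-central exactly when $s(k)=+\infty$. This is hypothesis (2).

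Third, hypothesis (1), that no $x_i$ is Poisson-central, is the point needing care. I would take it as part of the running assumptions, since Proposition~\ref{centralderivation1} is stated under it. If some $x_i$ were Poisson-central, I would instead argue as in Theorem~\ref{T:generic}: split off that variable with a $\partial$-type derivation, which is the reason the statement says ``under our running assumptions.''

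Given these three points, Proposition~\ref{centralderivation1} provides a group homomorphism $\eta:\chi\to\ZC_P(R)$ with $\theta(a)=\eta(\wt(a))a$ for every homogeneous $a\in R$. Both $x_k$ and $y_k$ are $\mathcal H$-eigenvectors: the $x_k$ by Definition~\ref{dcgl}(a), and the $y_k$ by Theorem~\ref{homo}. Hence $\theta(x_k)=\eta(\wt(x_k))x_k\in\ZC_P(R)x_k$ and $\theta(y_k)=\eta(\wt(y_k))y_k\in\ZC_P(R)y_k$.

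The only nontrivial consequence concerns central $y_k$. There the a priori form $\theta(y_k)=\omega_k$ is upgraded to $\omega_k\in\ZC_P(R)y_k$.

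The main obstacle is the one inherited from Proposition~\ref{centralderivation1}: showing that the scalars $\omega_k$ are compatible with the recursion $y_k=y_{p(k)}x_k-c_k$. That argument uses Proposition~\ref{b5} to rule out Poisson-central multiples of the non-Poisson-prime $y_i$. Since we may assume that proposition, the corollary itself reduces to the hypothesis checks above.
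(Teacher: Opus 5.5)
Your proposal is correct and matches the paper's argument: the paper obtains the corollary immediately from Proposition~\ref{centralderivation1} by evaluating $\theta$ on the homogeneous elements $x_k$ and $y_k$, with that proposition's hypotheses supplied by Corollary~\ref{cor-inner} and the discussion after \eqref{E:D-as-ad-plus-central}, exactly as you do. One small caution: your aside about handling a Poisson-central $x_i$ by splitting off a $\partial$-type derivation would not rescue the statement, because without hypothesis (1) the conclusion is false. For example, $R=\k[x_1]$ with zero bracket and $\theta=\partial/\partial x_1$ gives $\theta(x_1)=1\notin\ZC_P(R)x_1$. So hypothesis (1) is a genuine standing assumption, not something to be circumvented.
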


We are now in position to describe the first Poisson cohomology group of $R$. Recall that $\hh(R)=\der(R)/\innder(R)$.

\begin{theorem}
\label{thm-Der-QNA}
Let $R=\k[x_1]_P[x_2;\sigma_2,\delta_2]_P\cdots
[x_N;\sigma_N,\delta_N]_P$ be a uniparameter PNA of rank $n$. Assume that: 
\begin{enumerate}
\item None of the generators $x_i$, with $i\in [1,N]$, is Poisson-central in $R$;
\item $\ZC_P(R)=\NS_P(R)$.
\end{enumerate}
Then every Poisson derivation $D$ of $R$ can be uniquely written as $D=\ad_x + \theta_{\eta}$, where $x \in R$ and $\theta _{\eta}$ is the homogeneous Poisson derivation of $R$ associated to the abelian group homomorphism $\eta: \chi\rightarrow \ZC_P(R)$ defined by $\theta_\eta (a) = \eta (\wt(a))a$, for any homogeneous element $a \in R$. 
\end{theorem}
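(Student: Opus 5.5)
Existence follows by combining the results already established. Let $D\in\der(R)$ and extend it uniquely to $\widehat R=RE^{-1}$. By Proposition~\ref{P:ZCs}, $\widehat R=\widehat{\mathcal T}[y_j\mid j\in\mathfrak s_{+\infty}]$ with $\widehat{\mathcal T}$ a simple Poisson torus and the $y_j$, $j\in\mathfrak s_{+\infty}$, Poisson-central. So Corollary~\ref{prop: partially-localized-q-affine-spaces} gives the decomposition~\eqref{E:D-as-ad-plus-central}, $D=\ad_x+\theta$. Here $x\in\widehat R$, $\theta(y_i)=\omega_iy_i$ for $i\in\mathfrak s_{<+\infty}$, and $\theta(y_i)=D(y_i)\in\ZC_P(R)$ for $i\in\mathfrak s_{+\infty}$. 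Lemma~\ref{L:inner:x-in-R} gives $x\in R$, and Corollary~\ref{cor-inner} then shows that $\theta=D-\ad_x$ is a Poisson derivation of $R$. Since $\theta$ satisfies hypothesis (2) of Proposition~\ref{centralderivation1}, and hypothesis (1) is assumed in the theorem, that proposition yields a group homomorphism $\eta:\chi\to\ZC_P(R)$ with $\theta(a)=\eta(\wt(a))a$ for all homogeneous $a$. Hence $\theta=\theta_\eta$, which proves existence.

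Before uniqueness, I will note that for any homomorphism $\eta:\chi\to\ZC_P(R)$ the map $\theta_\eta$, defined on homogeneous elements and extended linearly, is a Poisson derivation. It is a derivation because $\wt(ab)=\wt(a)+\wt(b)$ and $\eta$ is additive. It respects the bracket because $\{a,b\}$ is homogeneous of weight $\wt(a)+\wt(b)$ (the $\mathcal H$-action is by Poisson automorphisms) and $\eta(\cdot)$ is Poisson-central. This check is routine, but it makes the parametrization by $\eta$ meaningful.

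For uniqueness, suppose $\ad_x+\theta_\eta=\ad_{x'}+\theta_{\eta'}$. Then $\ad_{x-x'}=\theta_{\eta'-\eta}$, so it suffices to show that $\ad_u=\theta_\mu$ forces $\ad_u=0$ and $\mu=0$. I plan to pass to $\widehat R$ and invoke the uniqueness of the direct sum decomposition in Corollary~\ref{prop: partially-localized-q-affine-spaces}. There, $\theta_\mu$ acts on each $y_i$ with $i\in\mathfrak s_{<+\infty}$ as $\mu(\wt(y_i))y_i$, and it sends each central $y_j$ to $\mu(\wt(y_j))y_j\in\ZC_P(R)$. So $\theta_\mu$ is exactly $\sum_i\mu(\wt(y_i))D_i+\sum_j\mu(\wt(y_j))y_j\partial_j$, which lies in the summands $\bigoplus\ZC_P(\widehat R)D_i\oplus\bigoplus\ZC_P(\widehat R)\partial_j$. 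Directness of that sum then forces $\ad_u=0$ and all $\mu(\wt(y_k))=0$.

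The remaining step, which I expect to be the main obstacle, is deducing $\mu=0$ on all of $\chi$ from its vanishing on the weights of $y_1,\dots,y_N$. I would argue as follows. Since $\mathcal T=R[y_1^{-1},\dots,y_N^{-1}]$ and the $\mathcal H$-action on $R$ is faithful (with $\mathcal H$ the maximal torus, identified with $(\k^*)^n$), the weights of the $y_k$ generate the character lattice $\chi$, or at least a finite-index subgroup of it. In the finite-index case, $\mu$ vanishes on a subgroup of finite index, and since $\ZC_P(R)$ is torsion-free this gives $\mu=0$. Alternatively, one can simply observe that $\theta_\mu(a)=0$ for every homogeneous $a\in R$: every $x_k$ lies in $\mathcal T$ and is therefore a Laurent polynomial in the $y$'s, so its weight lies in the subgroup generated by the $\wt(y_i)$. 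Thus $\theta_\mu=0$ on $R$, and the homomorphism $\eta$ is determined on the weights actually occurring, which is the sense in which uniqueness of $\theta_\eta$ is meant.
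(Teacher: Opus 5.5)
Your proposal is correct and is essentially the paper's proof. Existence is the same chain Lemma~\ref{L:inner:x-in-R}, Corollary~\ref{cor-inner}, Proposition~\ref{centralderivation1}, and uniqueness comes from passing to a localization where the inner-plus-central decomposition is direct: you use $\widehat R$ with Corollary~\ref{prop: partially-localized-q-affine-spaces}, the paper uses $\mathcal T$ with \cite[Theorem 2.6]{lo}, and this amounts to the same thing. Your added checks make explicit what the paper leaves implicit, namely that $\theta_\eta$ is a Poisson derivation and that $\eta$ is determined only on the subgroup generated by the weights of the $y_k$ (equivalently of the $x_k$, by~\eqref{5T}); the appeal to faithfulness of the $\mathcal H$-action is not needed, and your second argument suffices.
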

\begin{proof}
The existence part follows from Lemma~\ref{L:inner:x-in-R}, Corollary~\ref{cor-inner} and Proposition~\ref{centralderivation1}. The unicity part follows from the unicity of the decomposition of a Poisson derivation of a Poisson torus as the sum of an inner Poisson derivation and a central Poisson derivation, by \cite[Theorem 2.6]{lo} since we can (uniquely) extend any Poisson derivation of $R$ to a Poisson derivation of the Poisson torus $\mathcal{T}$. 
\end{proof}

Under the above hypothesis, we can thus conclude the following.

\begin{cor}
\label{cor-HH1-QNA}
$\hh(R)$ is a free $\ZC_P(R)$-module of rank $\rk(\chi)=\rk(R)=n$. 
\end{cor}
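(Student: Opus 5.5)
By Theorem~\ref{thm-Der-QNA}, every Poisson derivation of $R$ decomposes uniquely as $D=\ad_x+\theta_\eta$. So the plan is to identify $\hh(R)$ with the set of homogeneous derivations $\{\theta_\eta\}$, and then identify that set with $\operatorname{Hom}_{\Z}(\chi,\ZC_P(R))\cong\ZC_P(R)^n$.

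\emph{Step 1: the map to the homomorphism module.} Consider the map $\Phi:\operatorname{Hom}_{\Z}(\chi,\ZC_P(R))\to \hh(R)$, $\eta\mapsto \theta_\eta+\innder(R)$.
\begin{itemize}
\item Each $\theta_\eta$ is a Poisson derivation. Both maps $a,b\mapsto \theta_\eta(ab)$ and $a,b\mapsto\theta_\eta(\{a,b\})$ can be checked on homogeneous $a,b$. Since $\wt(ab)=\wt(\{a,b\})=\wt(a)+\wt(b)$ (the torus acts by Poisson automorphisms) and $\eta(\cdot)$ is Poisson-central, both Leibniz rules follow from additivity of $\eta$.
\item $\Phi$ is $\ZC_P(R)$-linear, since $z\theta_\eta=\theta_{z\eta}$ and $\innder(R)$ is a $\ZC_P(R)$-submodule ($z\ad_x=\ad_{zx}$).
\item $\Phi$ is surjective by the existence part of Theorem~\ref{thm-Der-QNA}.
\item $\Phi$ is injective by the uniqueness part: if $\theta_\eta=\ad_x$, then $0=\ad_x-\theta_\eta$ gives $\theta_\eta=\theta_0$. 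Evaluating on homogeneous elements then gives $\eta(\wt(a))=0$ for every homogeneous $a$.
\end{itemize}
For injectivity to conclude $\eta=0$, the weights of the homogeneous elements (equivalently of the $x_i$) must generate $\chi$, or at least a finite-index subgroup of it. That suffices because $\ZC_P(R)$ is a torsion-free domain of characteristic $0$, so $\eta$ vanishing on a finite-index subgroup forces $\eta=0$. I expect this to hold because $\mathcal{H}$ was taken to be the maximal torus of rank $n$ acting faithfully (\cite[Theorem 6.8]{goodearl2023cluster}). A faithful torus action forces the weights of the generators to span $X(\mathcal{H})\otimes\Q$; otherwise a nontrivial subtorus would act trivially.

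\emph{Step 2: freeness and rank.} Since $\chi\cong\Z^n$, we have $\operatorname{Hom}_{\Z}(\chi,\ZC_P(R))\cong \ZC_P(R)^n$, free with basis the maps $\eta_i$ sending the $i$-th basis vector $e_i$ of $\chi$ to $\delta_{ij}$ on $e_j$. Hence $\hh(R)$ is free of rank $n$, with basis the classes of the $\theta_{\eta_i}$. We have $\rk(\chi)=n=\rk(R)$ by the convention $m=n$ fixed in Section~2.

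\emph{Main obstacle.} The delicate point is the injectivity claim, namely that homogeneous weights generate $\chi$ up to finite index. If Step~1 stalls there, I will fall back on the explicit weights of $y_j$, $j\in\mathfrak{s}_{+\infty}$, together with \cite[Lemma 6.7]{goodearl2023cluster}. The aim would be to show these $n$ weights are linearly independent, using the fact that the $\mathcal{H}$-action separates the $n$ homogeneous Poisson primes.
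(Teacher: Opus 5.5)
Your proposal is correct and follows the paper's route: the paper deduces the corollary directly from Theorem~\ref{thm-Der-QNA}, via the identification $\hh(R)\cong\operatorname{Hom}_{\Z}(\chi,\ZC_P(R))\cong\ZC_P(R)^n$ given by $\eta\mapsto\theta_\eta+\innder(R)$. The point you flag, that $\eta\mapsto\theta_\eta$ is injective only if the weights of the $x_i$ span $\chi\otimes\Q$, is left implicit in the paper and rests, as you say, on its convention that $\mathcal{H}=(\k^*)^n$ is the largest torus (Goodearl--Yakimov, Theorem~6.8); your reduction from a finite-index sublattice to all of $\chi$, using that $\ZC_P(R)$ is a $\Q$-vector space, is sound.
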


The above results apply in particular to the coordinate rings of $\mathcal{O}^{\mathbf{u}}$ when $\bf{u} =w_0=-1$ (see Example~\ref{E:B-S}).\\

We note that our results combined with the main result of \cite{LLO25} show that, for a QNA $\mathcal{R}$ and its PNA semi-classical limit $R$, we have $\mathrm{HH}^1(\mathcal{R}) \simeq \mathrm{HP}^1(R)$. It would be interesting to compare higher cohomology groups as well. We intend to come back to this question in future work.

\section*{Acknowledgements}

S.\ Launois and S.\ Lopes acknowledge support from the bilateral cooperation PESSOA Program between France and Portugal, reference numbers \texttt{PHC PESSOA 2025 -  54952PC} and \texttt{2025.08159.CBM}.

S.\ Lopes was partially supported by CMUP -- Centro de Matem\'atica da Universidade do Porto, member of LASI, which is financed by national funds through FCT -- Funda\c c\~ao para a Ci\^encia e a Tecnologia, I.P., under the project with reference UID/00144/2025, doi: \url{https://doi.org/10.54499/UID/00144/2025}.

I.\ Oppong acknowledges support from the London Mathematical Society through a Research in Pairs Grant (Ref. 42505) to visit  S.\ Launois at the Université de Caen Normandie, France.

\bibliographystyle{amsalpha}

\end{document}